\documentclass{amsart}
\usepackage{mathabx}
\usepackage{amsmath}
\usepackage{mathrsfs}
\usepackage[usenames,dvipsnames]{color}
\usepackage{hyperref}
\hypersetup{
 colorlinks,
 linkcolor={blue!90!black},
 citecolor={red!80!black},
 urlcolor={blue!50!black}
}
\usepackage{tikz-cd}
\usepackage{graphicx}
\usepackage{cite}
\usepackage{amsfonts}

\newtheorem{thm}{Theorem}[section]
\newtheorem{lemma}[thm]{Lemma}

\newtheorem{prop}[thm]{Proposition}

\theoremstyle{definition}

\newtheorem{remark}[thm]{Remark}

\newtheorem{definitions}[thm]{Definitions}

\numberwithin{equation}{section}

\newcommand{\Hilb}{\mathrm{Hilb}}
\newcommand{\Gr}{\mathbf{Gr}}
\newcommand{\rank}{\mathrm{rank}}
\newcommand{\sat}{\mathrm{sat}}

\newcommand{\W}{\bigwedge}
\newcommand{\w}{\wedge}

\renewcommand{\P}{\mathbf{P}}
\newcommand{\N}{\mathbf{N}}
\newcommand{\Z}{\mathbf{Z}}
\newcommand{\kk}{{\mathbf{k}}}

\newcommand{\HH}{{\mathcal{H}}}
\newcommand{\h}{{\mathfrak{h}}}

\newtheorem{manualtheoreminner}{Theorem}

\makeatletter
\@namedef{subjclassname@2020}{
  \textup{2020} Mathematics Subject Classification}
\makeatother

\bibliographystyle{named}

\begin{document}
\author[R. Ramkumar, A. Sammartano]{Ritvik~Ramkumar and Alessio~Sammartano}
\address{(Ritvik Ramkumar) Department of Mathematics\\University of California at Berkeley\\Berkeley, CA\\94720\\USA}
\email{ritvik@math.berkeley.edu}
\address{(Alessio Sammartano) Dipartimento di Matematica \\ Politecnico di Milano \\ Milan \\ Italy}
\email{alessio.sammartano@polimi.it}

\keywords{Standard graded Hilbert scheme; lexicographic component; lexicographic ideal; reducible scheme; exterior algebra.}
\subjclass[2020]{Primary: 14C05; Secondary: 13F20, 13F55, 15A75.}

\title{On the smoothness of lexicographic points on Hilbert schemes}

\begin{abstract}
We study  the geometry of standard graded Hilbert schemes of polynomial rings and exterior algebras.
Our investigation is motivated by a famous theorem of  
  Reeves and Stillman 
for the  Grothendieck Hilbert scheme,
which states that the lexicographic point is smooth.
By contrast, 
we show that,
in standard graded Hilbert schemes of polynomial rings and exterior algebras,
 the lexicographic point can be singular, and it can lie in multiple irreducible components.
We answer questions of Peeva--Stillman and of Maclagan--Smith.

\end{abstract}

\maketitle

\section{Introduction}

Hilbert schemes are fundamental parameter spaces in algebraic geometry.
The  classic example is 
the  Grothendieck Hilbert scheme $\Hilb^{p}(\P^n)$ \cite{Grothendieck}, 
parametrizing the closed subschemes of $\P^n$ with a fixed Hilbert polynomial $p$. 
However, it is  useful to consider the more general standard graded Hilbert scheme  $\HH^\h(R)$, 
parametrizing the homogeneous ideals  with a fixed Hilbert function $\h$ in a graded ring $R$;
see  \cite{HaimanSturmfels} for the general theory and several applications.
Then $\Hilb^{p}(\P^n)$ is  a special case of $\HH^\h(R)$,
where $R$ is a polynomial ring and $\h$ is a sufficiently large truncation of $p$. 
Typical applications of  standard graded Hilbert schemes include  the study of various loci in the Grothendieck Hilbert scheme \cite{CEVV,DJNT,Erman},
 and, more recently,
  the study of ranks of forms \cite{BB,HMV}.

To aid in the study of Hilbert schemes, 
it is  beneficial to identify distinguished points on them.
Lexicographic ideals were introduced by Macaulay \cite{Macaulay} 
 to classify Hilbert functions and Hilbert polynomials. 
Several notable classes of Hilbert schemes,
including $\Hilb^p(\P^n)$ and  $\HH^\h(R)$ where $R$ is a polynomial ring or an  exterior algebra,
 possess a unique lexicographic point.
Often,
the lexicographic point can be used to obtain  geometric and algebraic information on the Hilbert scheme,
for example about connectedness \cite{Hartshorne,PeevaStillmanConnected},  irreducible components \cite{Reeves},
and syzygies  \cite{GMP,MuraiPeeva}.
A fundamental result in this context was proved by Reeves and Stillman \cite{ReevesStillman}:
the lexicographic point on the Grothendieck Hilbert scheme  $\Hilb^{p}(\P^n)$ is always smooth.
The result is particularly strong, since  $\Hilb^p(\P^n)$ is smooth if $n\leq 2$ \cite{Fogarty},
but otherwise it can have arbitrary singularities \cite{Vakil}.
An important consequence of the Reeves--Stillman theorem  is the fact that it determines a 
canonical component of $\Hilb^{p}(\P^n)$,  known as the lexicographic component.
The theorem is useful in various situations,
e.g.\ in questions of smoothness   \cite{SkjelnesSmith,Staal,RamkumarFew},
 rationality \cite{LellaRoggero}, 
 and in explicit constructions \cite{Gotzmann,RamkumarLinear}.
In some sense, the Reeves--Stillman theorem 
and Hartshorne's connectedness theorem 
are the main general tools available in the complicated study of the geography of $\Hilb^{p}(\P^n)$.
A version of the Reeves--Stillman theorem holds for toric Hilbert schemes, where the role of the lexicographic point is played by the toric point
\cite{PeevaStillmanToric},
and for multigraded Hilbert schemes in two variables, 
where the role of the lexicographic point is played by lex-most points \cite{MaclaganSmith}.

It is natural to ask whether the Reeves--Stillman theorem holds for standard graded Hilbert schemes 
$\HH^\h(R)$, 
see \cite[p.\ 157]{GMP}. 
The most prominent cases of interest are that  of the  polynomial ring $R=S$, 
which provides the most natural generalization of $\Hilb^p(\P^n)$,
and the case of  the exterior algebra $R=E$,
where the tangent space enjoys extra structure  in terms of Gr\"obner flips \cite{PeevaStillmanExterior}.
More generally, 
one would like to know whether the lexicographic point 
establishes a canonical component of $\HH^\h(R)$.
In this paper we answer these questions negatively.
For the polynomial ring, we prove in Section \ref{SectionPolynomialRing} the following result,
which settles a question of 
\cite[p.\ 1610]{MaclaganSmith}.

\newtheorem*{thm:TheoremPolynomial}{Theorem \ref{TheoremPolynomial}}
\begin{thm:TheoremPolynomial}
Let $S = \kk[x,y,z]$ and  $\h = (1, 3, 4, 4, 3, 3, 3, \ldots)$.
The standard graded  Hilbert scheme $\HH^\h(S)$ 
 is the union of two irreducible components  of dimension 8,
and  the lexicographic point  lies in their intersection;
in particular, the lexicographic point is singular.
\end{thm:TheoremPolynomial}
\noindent 
Moreover,  in Section \ref{SectionSecondPolynomialRing}
we show the existence of  standard graded Hilbert schemes
whose lexicographic point is  not even Cohen--Macaulay, 
and we also show that this pathological behavior may occur even when the lexicographic point is a limit of points parametrizing reduced subschemes. 

For the exterior algebra, we prove in Section \ref{SectionExteriorAlgebra}  the following result, 
which settles  a question of 
 \cite[p.\ 546]{PeevaStillmanExterior}.

\newtheorem*{thm:TheoremExterior}{Theorem \ref{TheoremExterior}}
\begin{thm:TheoremExterior} 
Let $E = \W \kk^5$ and $\h =(1,5,7,2)$. 
The standard graded Hilbert scheme  $\HH^\h(E)$  
 is the union of two irreducible components of dimension 14 and 15,
 and the lexicographic point  lies in their intersection;
in particular, the lexicographic point is singular.
\end{thm:TheoremExterior}

\section{Preliminaries}

In this section we fix some notation and terminology.
Throughout the paper $\kk$ denotes an algebraically closed field with $\mathrm{char}(\kk)\ne 2$.
All rings and  ideals in this paper are $\N$-graded $\kk$-vector spaces, and we will  omit the word ``graded''.
We only consider algebras $R$ that are finitely generated in degree 1, 
mainly the polynomial ring $S$ and the exterior algebra $E$, and their quotients.

We denote by $V_d$ the graded component of degree $d\in \Z$ of a vector space $V$.
We use $\langle f_1, \ldots, f_m\rangle$ to denote the $\kk$-linear span of elements $f_1,\ldots, f_m$.

The Hilbert function of an algebra $R$ is the sequence $(\dim_\kk R_d \, : \, d \in \N)$.
We only consider Hilbert functions of \emph{algebras},
therefore, by  abuse of terminology, when $I$ is an ideal,
the phrase ``Hilbert function of $I$'' refers to the Hilbert function of the quotient algebra  by $I$.

We denote by $\HH^\h(R)$ 
the {\bf standard graded Hilbert scheme} parametrizing  ideals of  $R$ with Hilbert function  $\h$
\cite{HaimanSturmfels}.
We denote by 
$\mathrm{Hilb}^{p}(\P^n)$
the {\bf Grothendieck Hilbert scheme}  parametrizing closed subschemes of $\P^n$ with Hilbert polynomial  $p$,
equivalently, saturated ideals with Hilbert polynomial  $p$.

When $R$ is a quotient of  $S$,
the $h$-vector $(h_0, h_1, \ldots, h_s)$ of $R$ is the list of coefficients of the numerator of the  Hilbert series of $R$ in its reduced rational form. We have $\deg(R)= \sum_{i=0}^s h_i$.
Since the Hilbert function of $R$ is determined by  its $h$-vector and Krull dimension,
 $h$-vectors provide a convenient notation to deal with non-Artinian algebras.

We fix the lexicographic order among monomials of $S$ or $E$.
A {\bf lexicographic ideal} is a monomial ideal $L$ such that each $L_d$ 
is spanned by the $\dim_\kk L_d$ largest monomials of degree $d$.
Classical results of Macaulay and Kruskal--Katona state that, 
for all $\h$ such that $\HH^\h(R)\ne \emptyset$, there exists a unique lexicographic ideal $L \in \HH^\h(R)$,
for both $R = S$ and $R= E$.
See \cite{GMP} for more details. 

We let  $\mathrm{in}_{\mathrm{lex}}(I)$ denote
the initial ideal of an ideal $I$.
There is a one-parameter family whose general fiber is $I$ and whose special fiber is $\mathrm{in}_{\mathrm{lex}}(I)$;
this phenomenon is known as Gr\"obner degeneration.

We denote by  $\Gr(r,V)$, respectively $\Gr(r,n)$, the Grassmannian variety  parametrizing  $r$-dimensional subspaces of $V$, respectively of $\kk^n$.
Recall that $\dim \Gr(r,n)=r(n-r)$.

A {\bf pencil} of quadrics of $R$ is a $2$-dimensional subspace $V \subseteq R_2$.

\section{The standard graded Hilbert scheme of the polynomial ring}\label{SectionPolynomialRing}

This section is devoted to the proof of Theorem \ref{TheoremPolynomial}.
Let $S = \kk[x,y,z] $ be  the polynomial ring in 3 variables over $\kk$.
We consider the standard graded Hilbert scheme 
$
\HH = \HH^{\h}(S),
$ 
parametrizing the ideals of $S$ with Hilbert function  $$
\h = (1,3,4,4,3,3,3, \ldots)
$$ 
in other words ideals $I\subseteq S$ such that
$$
 I_0 = I_1 = 0, \quad 
\dim_\kk I_2 = 2, \quad
 \dim_\kk I_3 = 6, \quad 
\dim_\kk I_d ={d+2 \choose 2} -3\, \text{ for } \, d \geq 4.
$$

Since  the ideals parametrized by $\HH$ are not saturated, 
taking saturation defines a non-trivial map $\sat: \HH \rightarrow \Hilb^3(\P^2)$.
The main idea of the proof is to understand $\HH$ by studying the fibers of this map, and induce a stratification of $\HH$ from one of $\Hilb^3(\P^2)$.
We begin by collecting basic facts about $\Hilb^3(\P^2)$.

\begin{lemma}\label{LemmaClassical3Points}
The Hilbert scheme $\Hilb^3(\P^2)$ is a smooth irreducible 6-fold. 
It is stratified by $h$-vectors into locally closed subschemes 
\begin{equation}\label{EquationStratification}
\Hilb^3(\P^2) = H_{(1,2)} \, {\textstyle \coprod}\, H_{(1,1,1)}
\end{equation} 
where  $H_{(1,2)}$ is open and  $H_{(1,1,1)}$ is an irreducible divisor.
The subscheme $H_{(1,1,1)}$ is the locus  of complete intersections of degrees $\{1, 3\}$,
while $H_{(1,2)}$ is the locus of codimension 2  ideals of minors of $2\times 3$ matrices of linear forms.
\end{lemma}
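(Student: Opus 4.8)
The opening assertion is a direct appeal to Fogarty's theorem \cite{Fogarty}: the Hilbert scheme of points on a smooth projective surface is smooth and irreducible, with $\dim \Hilb^n(\P^2) = 2n$, giving dimension $6$ when $n=3$. The substance of the lemma is therefore the $h$-vector stratification, and the plan is to first show that only the two $h$-vectors $(1,2)$ and $(1,1,1)$ occur, then to identify each stratum geometrically and compute its dimension.

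For the stratification, I would argue that a subscheme parametrized by $\Hilb^3(\P^2)$ has saturated ideal $I$ with $\dim_\kk(S/I)_d$ non-decreasing and stabilizing at $3$, so its $h$-vector is the first difference and $\sum_i h_i = 3$ by the formula recalled in the Preliminaries. The only sequences with $h_0 = 1$ summing to $3$ and satisfying Macaulay's growth condition are $(1,2)$ and $(1,1,1)$; the candidate with $h_1 = 0$ is excluded because once an entry vanishes all later entries vanish, which would force degree $1$. Concretely $h_1 = \dim_\kk(S/I)_1 - 1 = 2 - \dim_\kk I_1$, and $\dim_\kk I_1 \le 1$, since two independent linear forms would confine the subscheme to a single reduced point of length $1$. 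Hence $h = (1,2)$ exactly when $I_1 = 0$ (the subscheme lies on no line) and $h = (1,1,1)$ exactly when $\dim_\kk I_1 = 1$ (the subscheme is collinear). Openness of $H_{(1,2)}$ and closedness of $H_{(1,1,1)}$ then follow from upper semicontinuity of the fiberwise Hilbert function, since lying on a line is a closed condition.

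To describe $H_{(1,1,1)}$, I would use that a collinear length-$3$ subscheme contains a unique linear form $L$, and reducing modulo $L$ identifies it with a length-$3$ subscheme of $\ell = V(L) \cong \P^1$ cut out by a single cubic; thus $I = (L,F)$ is a complete intersection of type $\{1,3\}$, and conversely every such complete intersection has $h$-vector $(1,1,1)$ by a Koszul computation of its Hilbert series, $\frac{(1-t)(1-t^3)}{(1-t)^3} = \frac{1+t+t^2}{1-t}$. This realizes $H_{(1,1,1)}$ as the total space of the bundle over the dual plane $(\P^2)^\vee$ of lines whose fiber over $\ell$ is the space $\mathrm{Sym}^3 \ell \cong \P^3$ of length-$3$ divisors on $\ell$, so it is irreducible of dimension $2 + 3 = 5$, i.e.\ a divisor in the $6$-fold. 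For $H_{(1,2)}$, the subscheme is arithmetically Cohen--Macaulay of codimension $2$ with $\dim_\kk I_2 = 3$, and a Hilbert series computation forces the graded Betti numbers of the resolution $0 \to S(-3)^2 \to S(-2)^3 \to I \to 0$; by the Hilbert--Burch theorem $I$ is then generated by the $2\times 2$ minors of the associated $2\times 3$ matrix of linear forms, and conversely the minors of a sufficiently general such matrix cut out a non-collinear length-$3$ subscheme.

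I expect the main technical point to be the identification of $H_{(1,2)}$ with the minors locus: one must verify that the Hilbert--Burch matrix has linear entries (equivalently, that the first syzygies of the three quadric generators are linear), which I would extract from the Betti numbers dictated by the $h$-vector $(1,2)$, and that the minors genuinely define a length-$3$ scheme rather than a locus of larger codimension. The remaining steps — the combinatorial list of admissible $h$-vectors, the bundle structure and dimension count for the divisor, and the Hilbert series computations — are routine.
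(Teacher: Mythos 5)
Your proposal is correct and follows essentially the same route as the paper: Fogarty for smoothness and irreducibility, the degree-3 Cohen--Macaulay constraint to limit the $h$-vectors to $(1,2)$ and $(1,1,1)$, the realization of $H_{(1,1,1)}$ as a $\P^3$-bundle over the plane of lines, and Hilbert--Burch for $H_{(1,2)}$. You supply more justification than the paper does (the Macaulay growth argument for the $h$-vector list and the forced Betti table), but the underlying argument is identical.
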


\begin{proof}
The first statement is  \cite[Theorem 2.4]{Fogarty}.
Every $J \in \Hilb^3(\P^2)$ is  saturated, hence Cohen--Macaulay, of codimension 2 and degree 3.
The only possible $h$-vectors are  $(1,2)$ and $(1,1,1)$, and
this yields the stratification \eqref{EquationStratification}.
The  ideals   $J \in H_{(1,1,1)}$ are of the form $J = (\ell, c)$ 
where $\langle \ell \rangle \in \Gr(1,S_1) \cong \P^2$ 
and $\langle c\rangle \in \Gr(1, S_3/\ell S_2) \cong \P^3$.
It follows that $H_{(1,1,1)}$ is  isomorphic to a $\P^3$-bundle on a $\P^2$,
and therefore it is a closed irreducible 5-dimensional  subscheme  of $\Hilb^3(\P^2)$.
The last statement follows from the Hilbert--Burch theorem.
\end{proof}

We define three loci in the Hilbert scheme $\HH$,
refining the stratification of $\Hilb^3(\P^2)$ by  taking into account the codimension of the quadratic part of an ideal.

\begin{definitions}\label{DefinitionsLociPolynomial}
Let $\mathcal{X}$ be the closure in $\HH$ of the locus $\mathcal{X}^{\circ}$ consisting of ideals $I $ 
such that the ideal  $(I_2)$ has codimension 2.

Let $\mathcal{X}'$ be the locus  of ideals $I \in \HH$ such that $\sat(I)$ has  $h$-vector $(1,2)$
and the ideal $(I_2)$ has codimension 1.

Let $\mathcal{Y}$ be the locus  of ideals $I \in \HH$ such that $\sat(I)$ has  $h$-vector $(1,1,1)$.
\end{definitions}

\begin{prop}\label{PropositionParametrizationXPolynomial}
The locus $\mathcal{X}\subseteq \HH$ is irreducible of dimension 8.
\end{prop}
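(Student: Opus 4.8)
The plan is to use the saturation morphism $\sat$ to fiber the open dense locus $\mathcal{X}^{\circ}$ over the irreducible locus $H_{(1,2)}$ of Lemma \ref{LemmaClassical3Points}, with fibers an open subset of a fixed Grassmannian $\Gr(2,3)$, and then to pass to closures. Since $\mathcal{X}=\overline{\mathcal{X}^{\circ}}$, it suffices to prove that $\mathcal{X}^{\circ}$ is irreducible of dimension $8$. First I would check that $\sat$ carries $\mathcal{X}^{\circ}$ into $H_{(1,2)}$, never into $H_{(1,1,1)}$. For $I\in\mathcal{X}^{\circ}$ set $J=\sat(I)$; since $I\subseteq J$ we have $I_2\subseteq J_2$. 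If $J\in H_{(1,1,1)}$ then, by Lemma \ref{LemmaClassical3Points}, $J=(\ell,c)$ with $J_2=\ell S_1$, so both quadrics spanning $I_2$ are divisible by $\ell$ and $(I_2)\subseteq(\ell)$ has codimension $1$, contradicting the defining condition of $\mathcal{X}^{\circ}$. Hence $J\in H_{(1,2)}$.

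Next I would reconstruct $I$ from the pair $(J,I_2)$. Comparing $\h$ with the Hilbert function $(1,3,3,3,\ldots)$ of $J$ shows that $I_d=J_d$ for all $d\geq 4$ and that $\dim_\kk I_3=6$, while $\dim_\kk J_2=3$ and $\dim_\kk J_3=7$. Because $(I_2)$ has codimension $2$, the two quadrics spanning $I_2$ form a regular sequence, so there is no linear syzygy and $\dim_\kk I_2 S_1=6$; as $I_2 S_1\subseteq I_3$ this forces $I_3=I_2 S_1$. Thus $I$ is determined by the datum of a pencil $I_2\subseteq J_2$ together with $I_{\geq 4}=J_{\geq 4}$. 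Conversely, any regular-sequence pencil $I_2\in\Gr(2,J_2)$ produces, via $I_3:=I_2 S_1$ and $I_{\geq 4}:=J_{\geq 4}$, an ideal in $\mathcal{X}^{\circ}$ with saturation $J$: the containments $S_1 I_2\subseteq J_3$ and $S_1 J_d\subseteq J_{d+1}$ make it an ideal, the count above gives Hilbert function $\h$, and agreement with the saturated ideal $J$ in high degrees yields $\sat(I)=J$. This identifies the fiber of $\sat$ over $J$ with the dense open locus of regular-sequence pencils inside $\Gr(2,J_2)\cong\Gr(2,3)$, irreducible of dimension $2$; nonemptiness holds because two general conics through three non-collinear points share no common component, meeting in four points by B\'ezout.

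Finally I would globalize the fiberwise picture. Over $H_{(1,2)}$ the spaces $J_2$ have constant dimension $3$, so they assemble from the universal ideal into a rank-$3$ vector bundle $\mathcal{J}_2$, and the relative Grassmannian $\Gr(2,\mathcal{J}_2)\to H_{(1,2)}$ is irreducible of dimension $6+2=8$. The reconstruction above defines a morphism from the dense open regular-sequence locus of $\Gr(2,\mathcal{J}_2)$ to $\HH$, injective because $(J,I_2)$ is recovered from $I$ as $(\sat(I),I_2)$, with image $\mathcal{X}^{\circ}$. Therefore $\mathcal{X}^{\circ}$, and hence $\mathcal{X}$, is irreducible of dimension $8$. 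I expect the main obstacle to lie in this last step: verifying that the assignment $(J,I_2)\mapsto I$ is genuinely a morphism of schemes, which requires working with the universal ideal over $H_{(1,2)}$ and checking that $I_3=I_2 S_1$ and $I_{\geq 4}=J_{\geq 4}$ vary algebraically, the local freeness of $\mathcal{J}_2$ coming precisely from the constancy of $\dim_\kk J_2$ on $H_{(1,2)}$.
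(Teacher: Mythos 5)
Your proposal is correct and follows essentially the same route as the paper: fiber $\mathcal{X}^{\circ}$ over $H_{(1,2)}$ via the saturation map, identify each fiber with the open locus of regular-sequence pencils in $\Gr(2,J_2)\cong\P^2$, and conclude $6+2=8$. You simply spell out a few steps the paper leaves implicit (why the image avoids $H_{(1,1,1)}$, why $I_3=I_2S_1$, and the relative Grassmannian that globalizes the fiberwise picture).
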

\begin{proof}
We consider the saturation map $\mathcal{X}^{\circ} \rightarrow H_{(1,2)}$.
Let $I \in \mathcal{X}^{\circ}$ and denote  $\sat(I) = J$.
By definition, 
the ideal $(I_2)$ has codimension 2, hence it is a complete intersection of 2 quadrics.
Comparing the Hilbert functions of $I$ and $(I_2)$, we deduce that
$I = (I_2, J_4)$,
with $I_2 \in \Gr(2,J_2) \cong \P^2$.
By Lemma \ref{LemmaClassical3Points}, 
every ideal $J \in  H_{(1,2)}$ is generated by quadrics.
Moreover, by lower semicontinuity of codimension, a general $V \in \Gr(2,J_2)$ generates an ideal of codimension 2.
Thus, the fiber over each $J \in H_{(1,2)}$ is an open subset of a $\P^2$.
By Lemma \ref{LemmaClassical3Points} we conclude that $\mathcal{X}^{\circ}$, and therefore also $\mathcal{X}$,
are irreducible of dimension 8.
\end{proof}

\begin{lemma}\label{LemmaSaturated}
Every ideal $J \in H_{(1,2)}$ is in the $\mathrm{GL}_3$-orbit of one of the following
\begin{enumerate}
\item $(xy, xz, yz)$,
\item $(x^2, xy, yz) $,
\item $ (x^2, xy, xz+y^2)$,
\item $ (x^2, xy, y^2)$.
\end{enumerate}
\end{lemma}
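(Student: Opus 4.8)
The plan is to reinterpret the statement as the classification of length-$3$ subschemes $Z \subseteq \P^2$ up to projective equivalence. By Lemma \ref{LemmaClassical3Points}, an ideal $J \in H_{(1,2)}$ is the saturated ideal of such a $Z$, and the $h$-vector being $(1,2)$ rather than $(1,1,1)$ is exactly the condition $J_1 = 0$, that is, $Z$ is not contained in any line. Since $\mathrm{GL}_3$ acts on $S_1$ through $\mathrm{PGL}_3 = \mathrm{Aut}(\P^2)$, with the scalar matrices fixing every ideal, the $\mathrm{GL}_3$-orbits on $H_{(1,2)}$ are precisely the projective equivalence classes of these subschemes. I would then stratify according to the support of $Z$, which consists of three, two, or one point, and exhibit a single representative in each stratum.

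The reduced and two-point strata are straightforward. If $\mathrm{Supp}(Z)$ is three distinct points, the no-line hypothesis makes them non-collinear, and since $\mathrm{PGL}_3$ acts transitively on non-collinear triples I would send them to the coordinate points, obtaining $(xy, xz, yz)$. If $\mathrm{Supp}(Z)$ is two points, then $Z$ is a double point at some $p$ carrying a tangent direction $v$, together with a reduced point $q$; the scheme lies on a line exactly when $v$ points along the line through $p$ and $q$, which the hypothesis excludes. Using $\mathrm{PGL}_3$ to place $p = [0:0:1]$, $q = [0:1:0]$ and to normalize $v$ to the $x$-direction then yields $(x^2, xy, yz)$.

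The delicate stratum is when $Z$ is supported at a single point $p$, so that the local ideal $I$ has colength $3$ in $\mathcal{O}_{\P^2,p} \cong \kk[[x,y]]$. Here I would run the standard local analysis via the dimension $\delta$ of the space of linear leading forms of $I$ inside $\mathfrak{m}/\mathfrak{m}^2$: the case $\delta = 2$ gives colength $1$ by the formal inverse function theorem and so cannot occur, the case $\delta = 0$ forces $I = \mathfrak{m}^2$, and the case $\delta = 1$ forces $Z$ to be curvilinear. The fat point $\mathfrak{m}_p^2$, after moving $p$ to $[0:0:1]$, becomes $(x^2, xy, y^2)$, which never lies on a line. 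A curvilinear $Z$ determines a tangent direction at $p$ together with a second-order invariant encoded by its osculating conic, and the no-line condition is precisely the nonvanishing of the curvature. After fixing $p$ and the tangent direction, I would use the residual diagonal torus in the stabilizer to scale the curvature to $1$, landing on the osculating parabola and the ideal $(x^2, xy, xz + y^2)$.

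The main obstacle lies in this single-point stratum: in contrast to the reduced cases, the curvilinear schemes carry the continuous invariants of tangent direction and curvature, and only the linear group $\mathrm{PGL}_3$ --- not arbitrary analytic automorphisms --- is available to normalize them. The crux is therefore a stabilizer computation showing that, after fixing the support point, the remaining projective symmetries already act transitively on the admissible pairs consisting of a tangent direction and a nonzero curvature, so that one representative suffices. To close the argument I would finally verify by inspection of the quadratic parts that each of the four displayed ideals satisfies $\dim_\kk J_2 = 3$, hence has $h$-vector $(1,2)$, confirming that all four genuinely lie in $H_{(1,2)}$.
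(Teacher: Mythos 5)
Your proof is correct, and it follows the same basic stratification as the paper---by the number of points in the support of $V(J)$---with the same representatives emerging in each stratum; the three-point and two-point cases are handled essentially identically (non-collinearity from $J_1=0$, transitivity of $\mathrm{PGL}_3$ on non-collinear triples, and the tangent-direction-transverse-to-$\overline{pq}$ analysis, which is just a geometric rephrasing of the paper's elimination of the primary component $(x,z^2)$ in favor of $(x^2,z)$). The genuine divergence is in the single-point stratum: the paper simply cites the classification of $(x,y)$-primary ideals of low multiplicity due to Mantero--McCullough, whereas you give a self-contained local argument, splitting into $I=\mathfrak{m}^2$ versus curvilinear according to the dimension $\delta$ of the space of linear leading forms, and then normalizing the curvilinear case by a stabilizer computation. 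Your route is longer but makes the lemma independent of the literature, and the one step you flag as the crux---that after fixing the support point and tangent direction, the residual torus scales the nonzero curvature $a$ of the osculating conic to $1$ (explicitly, $x\mapsto\lambda x$, $y\mapsto\mu y$ sends $a$ to $a\lambda^2/\mu$, which sweeps out all of $\kk^*$)---does go through, so there is no gap. One cosmetic remark: your normalized curvilinear representative and the paper's $(x^2,xy,xz+y^2)$ agree only up to a harmless sign or coordinate swap, just as the paper's own computation of $(x,y)\cap(x^2,z)$ matches $(x^2,xy,yz)$ only after exchanging $y$ and $z$; since the statement is about $\mathrm{GL}_3$-orbits this is immaterial.
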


\begin{proof}
The subscheme $V(J)\subseteq \P^2$ has dimension 0 and degree 3, 
so it is supported at 1, 2, or 3 points.
More specifically, $J = \cap_{i=1}^s \mathfrak{q}_i $
where each $\mathfrak{q}_i$ is  primary and $V(\mathfrak{q}_i)$ is supported at an isolated point, $s \leq 3$,
and $\sum_{i=1}^s \deg V(\mathfrak{q}_i) = 3$.
If $s=3$ and the  points are non-collinear, then 
we may change coordinates to have $J = (x,y) \cap (x,z) \cap (y,z) = (xy,xz,yz)$.
On the other hand, if the  points are collinear,
then $J = (\ell, c)$ with $\ell \in S_1, c \in S_3$,
thus $J$ has $h$-vector $(1,1,1)$.
If $s=2$, then we may assume $\deg V(\mathfrak{q}_1) =1$ and $\deg V(\mathfrak{q}_2) =2$.
Up to changes of coordinates, we have $\mathfrak{q}_1 = (x,y)$ and either 
$\mathfrak{q}_2 = (x,z^2)$ or $\mathfrak{q}_2 = (x^2,z)$.
However, the former case cannot occur, since $J$ contains no linear form, 
therefore
$J = (x,y) \cap (x^2,z) = (x^2, xy, yz)$.
Finally, the cases with $s=1$ follow e.g.\ from \cite[Theorem 2.1]{ManteroMcCullough}.
\end{proof}

\begin{lemma}\label{LemmaClosure}
We have $\mathcal{X}' \subseteq \mathcal{X}$.
\end{lemma}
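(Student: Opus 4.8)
The plan is to analyze $\mathcal{X}$ and $\mathcal{X}'$ simultaneously through the saturation map, showing that every point of $\mathcal{X}'$ is a flat limit of points of $\mathcal{X}^{\circ}$ lying in the same fiber. First I would record a concrete description of the two loci over a fixed $J \in H_{(1,2)}$. Writing $F_J$ for the fiber of $\sat$ over $J$ inside $\HH$, an ideal $I \in F_J$ satisfies $I_d = J_d$ for all $d \geq 4$ (saturation depends only on large degrees), so it is governed by the subspace $I_2 \in \Gr(2,J_2) \cong \P^2$. When $(I_2)$ has codimension $2$ we are in $\mathcal{X}^{\circ}$ and $I = (I_2, J_{\geq 4})$ is determined by $I_2$, exactly as in Proposition \ref{PropositionParametrizationXPolynomial}. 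When $(I_2)$ has codimension $1$, so that $I_2 = \ell W$ for a linear form $\ell$ and a pencil of linear forms $W$, a dimension count gives $\dim_\kk S_1 I_2 = 5$; hence $I_3 = S_1 I_2 + \langle c\rangle$ for one extra cubic $c \in J_3$, and these are precisely the points of $\mathcal{X}'$ over $J$, parametrized by the degenerate $I_2$ together with the class of $c$ in $J_3 / S_1 I_2$.

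Next I would set up the degeneration globally. Over the smooth irreducible base $H_{(1,2)}$ (Lemma \ref{LemmaClassical3Points}) I form the Grassmann bundle $\mathcal{G} = \Gr(2,\mathcal{J}_2)$, where $\mathcal{J}_2$ is the rank-$3$ bundle of quadratic parts, together with the rational map $\phi \colon \mathcal{G} \dashrightarrow \HH$ sending $(J, V) \mapsto (V, J_{\geq 4})$, defined on the open locus where $(V)$ has codimension $2$ and with image $\mathcal{X}^{\circ}$. Since $\HH$ is projective, I can extend $\phi$ along curves by the valuative criterion: given a point of $\mathcal{X}'$ with data $(J, I_2^0, c)$, it suffices to produce a one-parameter family $V(t) \to I_2^0$ inside $\Gr(2,J_2)$, landing in the codimension-$2$ locus for $t \neq 0$, whose flat limit equals the prescribed ideal. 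Any such limit lies in $\mathcal{X}$, retains the saturation $J$ (because $\phi(V(t))_{\geq 4} = J_{\geq 4}$ is constant), and has degenerate quadratic part $I_2^0$, hence lies in $\mathcal{X}'$; so the entire content is to realize every prescribed extra cubic $c$ as such a limit.

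The heart of the argument is therefore a local surjectivity statement: as the tangent direction of $V(t)$ varies, the limiting cubic $\lim_{t\to 0} S_1 V(t) \big/ S_1 I_2^0$ should sweep out all of $\P(J_3 / S_1 I_2^0)$. Since $\mathcal{X}$ and $\mathcal{X}'$ are $\mathrm{GL}_3$-invariant, I would reduce via Lemma \ref{LemmaSaturated} to the four standard ideals $J$ and, for each, to representative degenerate subspaces $I_2^0 \subseteq J_2$. For instance, for $J = (xy,xz,yz)$ and $I_2^0 = \langle xy, xz\rangle$, a path in the affine chart of $\Gr(2,J_2)$ around $I_2^0$ has the form $V(t) = \langle xy + t\alpha\, yz,\ xz + t\beta\, yz\rangle$, and computing the flat limit of $S_1 V(t)$ produces the extra cubic $\beta\, y^2 z - \alpha\, yz^2$; as $(\alpha : \beta)$ ranges over $\P^1$ this covers all of $\P(J_3 / S_1 I_2^0)$, so every point of $\mathcal{X}'$ over this $J$ lies in $\mathcal{X}$.

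Finally I would carry out the analogous computation for the remaining ideals of Lemma \ref{LemmaSaturated}, where the degenerate locus inside $\Gr(2,J_2)$ may be positive-dimensional; for $J = (x,y)^2$ it is a $\P^1$ of subspaces of the form $\langle \ell x, \ell y\rangle$. I expect this case analysis — verifying in each instance that the limiting cubics exhaust the relevant projective space — to be the main obstacle, since it requires choosing the perturbation $V(t)$ compatibly with the shape of $J_2$ and checking that no cubic class is missed. The model computation above suggests that the correspondence between tangent directions and limiting cubics is in fact a bijection, which is precisely what makes the surjectivity, and hence the inclusion $\mathcal{X}' \subseteq \mathcal{X}$, go through.
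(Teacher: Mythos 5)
Your overall architecture matches the paper's: stratify $\mathcal{X}'$ by the four normal forms of $J$ from Lemma \ref{LemmaSaturated} and exhibit explicit one-parameter degenerations from $\mathcal{X}^{\circ}$. But there is a genuine gap in the central mechanism. You insist that the degenerating family stay \emph{inside the fiber of the saturation map}, i.e.\ that $V(t)$ be a path in $\Gr(2,J_2)$ for the fixed $J$. This happens to work for $J=(xy,xz,yz)$ --- the one case you computed --- but it fails for the other strata. Take $J=(x,y)^2$ and $I_2^0=\langle x^2,xy\rangle$. Here $J_2/I_2^0=\langle y^2\rangle$ is only one-dimensional, so any path in $\Gr(2,J_2)$ approaching $I_2^0$ has the form $V(t)=\langle x^2+f(t)y^2,\ xy+g(t)y^2\rangle$. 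The Koszul syzygy lifts to $y\,(x^2+fy^2)-x\,(xy+gy^2)=f y^3-g\,xy^2$, and since $xy^2\in S_1I_2^0$ the limiting extra cubic always lies in $S_1I_2^0+\langle y^3\rangle$. One can check directly that no element of $\lim_t S_1V(t)$ can have a nonzero $y^2z$-coefficient: the monomials $x^2z$ and $xyz$ each occur in exactly one of the six spanning products, so their coefficients in any convergent element stay bounded, forcing the $y^2z$-coefficient (which equals $a_3f+a_6g$) to tend to $0$. Hence the points of $\mathcal{X}'$ over $(x,y)^2$ with extra cubic $y^2(\alpha y+\beta z)$, $\beta\neq 0$, are \emph{never} within-fiber limits of $\mathcal{X}^{\circ}$. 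The same obstruction occurs for $J=(x^2,xy,xz+y^2)$ and for the pencil $\langle x^2,xy\rangle$ inside $J=(x^2,xy,yz)$; your expectation that ``the correspondence between tangent directions and limiting cubics is a bijection'' is false in these cases (the map drops rank).

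The fix is to drop the within-fiber constraint and allow the saturation to move along the degeneration. This is exactly what the paper's families do: e.g.\ for stratum $(4)$ it takes $I(t)=\bigl(x^2+ty(\alpha y+\beta z),\,xy,\,y^4,y^3z,y^2z^2\bigr)$, where the perturbation $y(\alpha y+\beta z)$ involves $yz\notin J_2$, so $\sat(I(t))$ is a different point of $H_{(1,2)}$ for $t\neq 0$. Your Grassmann-bundle $\mathcal{G}=\Gr(2,\mathcal{J}_2)$ over $H_{(1,2)}$ would in principle accommodate such curves (they move the base point as well as the fiber coordinate), but then the clean ``tangent direction $\mapsto$ limiting cubic over a fixed $J$'' analysis no longer organizes the computation, and one is back to checking explicit families case by case, which is what the paper does. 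As written, your argument proves the inclusion only for the stratum over $(xy,xz,yz)$ and for the pencil $\langle xy,yz\rangle$ over $(x^2,xy,yz)$.
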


\begin{proof}
The saturation map $\mathcal{X}' \rightarrow H_{(1,2)}$
 stratifies  $\mathcal{X}'$  by the $\mathrm{GL}_3$-orbits of  saturations.
There are four strata $\mathcal{X}'_{(1)}, \mathcal{X}'_{(2)}, \mathcal{X}'_{(3)}, \mathcal{X}'_{(4)}$,
corresponding to the  orbits of Lemma \ref{LemmaSaturated},
and it suffices to show that $\mathcal{X}'_{(i)} \subseteq \mathcal{X}$ for each $i$.
Equivalently,  it suffices  to show that,
for each of the four ideals  $J$ of Lemma \ref{LemmaSaturated} and every 
$I \in \mathcal{X}'$ with $ \sat(I)=J$,
we have $I \in \mathcal{X}$.
We also point out that for all $I \in \mathcal{X}'$, since the ideal $(I_2)$ has codimension 1, 
$I_2$ is spanned by two reducible quadrics with a common factor, 
i.e., $I_2$ is a pencil of reducible quadrics.

Stratum $\mathcal{X}'_{(1)}$:
Let $I \in \mathcal{X}'$ with $\sat(I) = J = (xy, xz, yz)$.
There are  3 pencils of reducible quadrics in $J_2$, namely $\langle xy,xz \rangle, \langle xy,yz \rangle$, and $ \langle xz,yz \rangle$,
so we may assume $I_2 = \langle xy, xz \rangle$.
Comparing the Hilbert functions of $I$ and $J$ we deduce that
$
I = \big(xy, xz, yz(\alpha y + \beta z), y^3z, y^2z^2, yz^3\big)
$
for some $\alpha, \beta \in \kk$.
In order to show that $\mathcal{X}_{(1)}'\subseteq \mathcal{X}$, 
it suffices to show that $I $ is a limit of ideals of $\mathcal{X}^{\circ}$ 
when $\alpha, \beta \ne 0$,
since $\mathcal{X}$ is closed.
We claim that a desired limit is
$$
I(t) = \big(xy+ty(\alpha y+\beta z),xz,y^3z, y^2z^2, yz^3\big) \longrightarrow (xz,xy,yz(\alpha y+\beta z),y^3z, y^2z^2, yz^3).
$$
In fact,  
the saturation of $I(t)$ is
  $\sat(I(t)) = (xy+t\alpha y^2,xz,yz)$,
which 
is the ideal of  minors of the matrix
$$
\begin{pmatrix}
x	& y & 0\\
-t\alpha y 	& y & z 
\end{pmatrix}
$$
and therefore has Hilbert function $(1,3,3,3,\ldots)$.
Comparing $I(t)$ and   $\sat(I(t))$ we see  that 
$I(t)$ has Hilbert function $\h$.
Moreover, the ideal $(I(t)_2)$ clearly has codimension 2, thus $I(t)\in \mathcal{X}^{\circ} $.

Stratum $\mathcal{X}'_{(2)}$:
Let $I \in \mathcal{X}'$ with $\sat(I) = J = (x^2, xy, yz)$.
The pencils of reducible quadrics in $J_2$ are $\langle x^2, xy\rangle $ and $\langle xy, yz\rangle $.
If  $I_2 = \langle xy, yz \rangle$
then we conclude that
$
I = \big(xy, yz, x^2(\alpha x + \beta z), x^4, x^3z, x^2z^2\big)
$
for some $\alpha, \beta \in \kk$.
When $\alpha, \beta \ne 0$,
 we have $I \in \mathcal{X}$ since,
 as in the previous paragraph,
it is a limit of ideals in $\mathcal{X}^{\circ}$:
$$
I(t) = \big(yz+tx(\alpha x+\beta z),xy,x^4, x^3z, x^2z^2\big) \longrightarrow \big(yz,xy,x^2(\alpha x+\beta z),x^4, x^3z, x^2z^2\big).
$$
If $I_2 = \langle x^2, xy \rangle$ then 
$
I = \big(x^2, xy, yz(\alpha y + \beta z), y^3z, y^2z^2, yz^3\big),
$
and the following limit for $\alpha, \beta \ne 0$ shows that $I\in \mathcal{X}$
$$
I(t) = \big(x^2+t(\alpha y+\beta z)z,xy, y^3z, y^2z^2, yz^3\big) \longrightarrow \big(x^2,xy,yz(\alpha y+\beta z), y^3z, y^2z^2, yz^3\big). 
$$

Stratum $\mathcal{X}'_{(3)}$:
Let $I \in \mathcal{X}'$ with $\sat(I) = J = (x^2, xy, xz+y^2)$.
Since $xz+y^2$ is irreducible,  the only pencil of reducible quadrics in $J_2$ is $\langle x^2, xy\rangle$.
We get
$
I = \big(x^2, xy, (xz+y^2)(\alpha y + \beta z)\big) + (xz+y^2)\big(y,z)^2
$
and,  as in the previous paragraphs,
 the following limit for $\alpha, \beta \ne 0$ shows that $I\in \mathcal{X}$
\begin{eqnarray*}
I(t) &=& \big(x^2+ t(\alpha y^2+\beta zy),xy-t(\alpha yz+\beta z^2)\big) + (y^2+xz)\big(x,y,z\big)^2 \\
&\rightarrow & \big(x^2,xy,(y^2+xz)(\alpha y+\beta z)\big)+ (y^2+xz)\big(y,z\big)^2.
\end{eqnarray*}

Stratum $\mathcal{X}'_{(4)}$:
Let $I \in \mathcal{X}'$ with $\sat(I) = J = (x^2, xy,y^2)$.
Up to changes of coordinates, we may assume $I_2 = \langle x^2, xy\rangle$,
then $I= (x^2, xy, y^2(\alpha y+\beta z),y^4, y^3z,y^2z^2) $,
and the following limit for $\alpha, \beta \ne 0$ shows that $I\in \mathcal{X}$
$$
I(t) = \big(x^2+ty(\alpha y+\beta z),xy,y^4,y^3z,y^2z^2\big) \longrightarrow  \big(x^2, xy, y^2(\alpha y+\beta z),y^4, y^3z,y^2z^2\big).
$$
\end{proof}

\begin{prop}\label{PropositionParametrizationYPolynomial}
The locus $\mathcal{Y}\subseteq \HH$ is closed and irreducible of dimension 8.
\end{prop}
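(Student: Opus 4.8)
The plan is to study the saturation map $\sat\colon \mathcal{Y}\to H_{(1,1,1)}$, exhibit it as a fibration with irreducible $3$-dimensional fibers, and then assemble these fibers into a tower of Grassmann bundles over the irreducible $5$-fold $H_{(1,1,1)}$ of Lemma \ref{LemmaClassical3Points}. Closedness comes for free: by definition $\mathcal{Y}=\sat^{-1}(H_{(1,1,1)})$, and since $H_{(1,1,1)}$ is closed in $\Hilb^3(\P^2)$ and $\sat$ is a morphism, $\mathcal{Y}$ is closed in $\HH$. It then remains to establish irreducibility and the dimension count $5+3=8$.

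For the fiber analysis I would fix $J=(\ell,c)\in H_{(1,1,1)}$ and first record, from the $h$-vector $(1,1,1)$, that $\dim_\kk J_2=3$, $\dim_\kk J_3=7$, and $\dim_\kk J_d=\binom{d+2}{2}-3$ for $d\geq 2$. For $I\in\mathcal{Y}$ with $\sat(I)=J$ we have $I\subseteq J$, and comparing the Hilbert functions of $I$ and $J$ forces $I_d=J_d$ for all $d\geq 4$, while $I_2\in\Gr(2,J_2)$ and $I_3\in\Gr(6,J_3)$. The only ideal condition that is not automatic is $S_1I_2\subseteq I_3$ (the inclusions $S_1I_d\subseteq I_{d+1}$ for $d\neq 2$ hold because there $I$ agrees with the ideal $J$). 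Conversely, any pair $(I_2,I_3)$ with $I_2\in\Gr(2,J_2)$, $S_1I_2\subseteq I_3\subseteq J_3$, $\dim_\kk I_3=6$, together with $I_d:=J_d$ for $d\geq 4$, assembles into an ideal of Hilbert function $\h$; its saturation is exactly $J$, since it coincides with the saturated ideal $J$ in all degrees $\geq 4$. Thus the fiber over $J$ is precisely the set of such pairs $(I_2,I_3)$.

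To compute the fiber dimension, I would use that $J_2=\ell S_1$, so that $I_2=\ell W$ for a unique $W\in\Gr(2,S_1)$; then $S_1I_2=\ell\,(S_1W)$ and $\dim_\kk S_1W=\dim_\kk(a,b)_2=5$ for any basis $a,b$ of $W$, uniformly in $W$. Hence $J_3/S_1I_2$ is $2$-dimensional and $I_3$ ranges over $\Gr(1,J_3/S_1I_2)\cong\P^1$, so each fiber fibers over $\Gr(2,J_2)\cong\P^2$ with $\P^1$-fibers and is irreducible of dimension $3$. Globalizing, the degree $2$ and $3$ graded pieces of the universal saturated ideal over $H_{(1,1,1)}$ form vector bundles $\mathcal{J}_2,\mathcal{J}_3$ of ranks $3$ and $7$; forming the Grassmann bundle $\Gr(2,\mathcal{J}_2)$, then the constant-rank-$5$ subbundle $S_1\mathcal{I}_2\subseteq\mathcal{J}_3$, and finally the $\P^1$-bundle $\Gr(1,\mathcal{J}_3/S_1\mathcal{I}_2)$ produces an irreducible variety $\mathcal{Z}$ of dimension $5+2+1=8$ carrying a tautological flat family of ideals. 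By the universal property of $\HH$ this family induces a morphism $\mathcal{Z}\to\HH$ which, by the fiber description above, is a bijection onto $\mathcal{Y}$; therefore $\mathcal{Y}$ is irreducible of dimension $8$.

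The main obstacle is the uniform rank computation $\dim_\kk S_1I_2=5$: it is exactly what guarantees that $S_1\mathcal{I}_2$ is an honest subbundle of $\mathcal{J}_3$ (so that the final $\P^1$-bundle $\Gr(1,\mathcal{J}_3/S_1\mathcal{I}_2)$ is well defined and the fibration is equidimensional), and it is paired with the verification that each reconstructed $I$ has saturation \emph{exactly} $J$ rather than a larger ideal, which is what keeps $I$ inside the correct stratum and makes $\sat$ a genuine fibration over $H_{(1,1,1)}$.
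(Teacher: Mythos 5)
Your proof is correct and follows essentially the same route as the paper: fibering $\mathcal{Y}$ over the $5$-dimensional stratum $H_{(1,1,1)}$ via saturation, identifying each fiber as a $\P^1$-bundle over $\Gr(2,S_1)\cong\P^2$ (your $W=\langle\ell_2,\ell_3\rangle$ and residual cubic match the paper's $\ell_1\ell_2,\ell_1\ell_3,c_2$), and concluding $\dim\mathcal{Y}=5+3=8$ with closedness from $\mathcal{Y}=\sat^{-1}(H_{(1,1,1)})$. You are merely more explicit than the paper about the uniform rank $\dim_\kk S_1I_2=5$ and the bundle-tower globalization.
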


\begin{proof}
We have a map $\mathcal{Y} \rightarrow H_{(1,1,1)}$ defined by $ I \mapsto \sat(I)$.
An ideal $J \in  H_{(1,1,1)}$ is of the form $J = (\ell_1, c_1)$ with $\ell_1\in S_1, c_1\in S_3\setminus (\ell_1)$.
For every $I \in \mathcal{Y}$ with $\sat(I) = J$,
by comparing Hilbert functions,
 we must have
$I = (\ell_1 \ell_2, \ell_1 \ell_3, c_2, J_4)$ where 
$$
\langle \ell_2, \ell_3 \rangle \in \Gr(2,S_1) \cong \P^2,
\quad
\langle  c_2 \rangle\in \Gr\left(1, \frac{J_3}{\langle \ell_1 \ell_2, \ell_1 \ell_3\rangle S_1 }\right) = \Gr(1,2) \cong \P^1.
$$
Thus, the fiber over each $J \in H_{(1,1,1)}$  is a $\P^1$-bundle over a $\P^2$,
and by Lemma \ref{LemmaClassical3Points} we conclude
that $\mathcal{Y}$ is irreducible of dimension 8.
Note that $\mathcal{Y}\subseteq \HH$ is closed, since it is the preimage of the closed subset $H_{(1,1,1)}
\subseteq \Hilb^3(\P^2)$.
\end{proof}

We are ready to state the main result of this section.

\begin{manualtheoreminner} \label{TheoremPolynomial}
The standard graded Hilbert scheme $\HH = \HH^\h(S)$ is a union of two  irreducible components of dimension 8.
The lexicographic point of  $\HH$ lies in the intersection of the two components, and is a singular point.
\end{manualtheoreminner}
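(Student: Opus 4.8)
The plan is to assemble the two components from the loci already constructed and then locate the lexicographic point inside their intersection. Propositions \ref{PropositionParametrizationXPolynomial} and \ref{PropositionParametrizationYPolynomial} exhibit two irreducible 8-dimensional loci $\mathcal{X}$ and $\mathcal{Y}$ in $\HH$, with $\mathcal{Y}$ closed. First I would argue that $\HH$ is set-theoretically the union $\mathcal{X} \cup \mathcal{Y}$. For any $I \in \HH$, the saturation $\sat(I)$ lies in $\Hilb^3(\P^2)$, whose stratification \eqref{EquationStratification} into $H_{(1,2)}$ and $H_{(1,1,1)}$ from Lemma \ref{LemmaClassical3Points} splits $\HH$ according to the $h$-vector of $\sat(I)$. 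If $\sat(I)$ has $h$-vector $(1,1,1)$ then $I \in \mathcal{Y}$ by definition. If $\sat(I)$ has $h$-vector $(1,2)$, then the codimension of the ideal $(I_2)$ generated by the quadratic part is either $2$, placing $I$ in $\mathcal{X}^\circ \subseteq \mathcal{X}$, or $1$, placing $I$ in $\mathcal{X}' \subseteq \mathcal{X}$ by Lemma \ref{LemmaClosure}. (The codimension cannot drop below $1$ since $\dim_\kk I_2 = 2 > 0$.) Hence every point of $\HH$ lies in $\mathcal{X}$ or $\mathcal{Y}$.

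Next I would promote this set-theoretic decomposition to the statement that $\mathcal{X}$ and $\mathcal{Y}$ are exactly the irreducible components. Since both are irreducible and $8$-dimensional and their union is all of $\HH$, it suffices to verify that neither is contained in the other; equivalently, that $\mathcal{X} \not\subseteq \mathcal{Y}$ and $\mathcal{Y}\not\subseteq \mathcal{X}$. This is immediate from the saturation invariant: a general point of $\mathcal{X}$ (coming from $\mathcal{X}^\circ$) has $\sat(I)$ with $h$-vector $(1,2)$, while every point of $\mathcal{Y}$ has $\sat(I)$ with $h$-vector $(1,1,1)$, and these strata are disjoint in $\Hilb^3(\P^2)$. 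Thus $\HH$ has precisely two irreducible components, both of dimension $8$.

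The heart of the theorem, and the step I expect to be the main obstacle, is identifying the lexicographic point $L$ and proving it lies in $\mathcal{X}\cap \mathcal{Y}$. I would first write $L$ explicitly as the unique lexicographic ideal with Hilbert function $\h$: in each degree $L_d$ is spanned by the largest monomials, so $L_2 = \langle x^2, xy\rangle$, then $L_3$ is spanned by the six largest cubics, and so on. The key structural observation is that $(L_2) = (x^2, xy) = (x) \cap (x^2,y)$ has codimension $1$, so $\sat(L)$ should have $h$-vector $(1,1,1)$, which would directly place $L \in \mathcal{Y}$. I would confirm this by computing $\sat(L)$ explicitly and checking it is a complete intersection of type $\{1,3\}$. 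To show simultaneously that $L \in \mathcal{X}$, I would exhibit $L$ as a limit of ideals in $\mathcal{X}^\circ$, in the same spirit as the explicit degenerations constructed in the proof of Lemma \ref{LemmaClosure}: perturb the quadric $x^2$ by a term that makes the quadratic part cut out a codimension-$2$ complete intersection, arrange the higher-degree generators so the Hilbert function stays equal to $\h$ along the family, and let the parameter tend to $0$ to recover $L$. Carrying out this degeneration so that the Hilbert function is preserved at every fiber is the delicate part, since it requires a careful choice of the flat family. Once $L \in \mathcal{X}\cap \mathcal{Y}$ is established, singularity is automatic: $L$ lies on two distinct $8$-dimensional components of $\HH$, so the Zariski tangent space has dimension strictly greater than $8$, and therefore $L$ is a singular point of $\HH$.
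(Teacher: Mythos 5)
Your overall architecture is exactly that of the paper: the set-theoretic decomposition $\HH = \mathcal{X}\cup\mathcal{Y}$ via the $h$-vector of the saturation together with Lemma \ref{LemmaClosure}, the observation that neither component contains the other, the identification $L=(x^2,xy,xz^2,y^4,y^3z)$ with $\sat(L)=(x,y^3)$ placing $L\in\mathcal{Y}$, and the conclusion that a point on two distinct components is singular. All of that is correct as written.

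The genuine gap is the step you yourself flag as ``the delicate part'': you never actually produce the family exhibiting $L$ as a limit of points of $\mathcal{X}^\circ$, and without it the claim $L\in\mathcal{X}$ --- the heart of the theorem --- is unproved. Note also that a naive perturbation of $x^2$ alone cannot work: any ideal of $\mathcal{X}^\circ$ is of the form $(I_2, J_4)$ with $I_2$ a complete intersection pencil inside the quadrics of a $(1,2)$-point $J$, so the degrees in which generators appear, not just the quadratic part, must change in the limit ($L$ has generators in degree $4$, while ideals of $\mathcal{X}^\circ$ are generated in degrees $2$ and $4$ with a different generator count), and flatness of the family is not automatic for an ad hoc one-parameter deformation. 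The paper resolves this cleanly by a Gr\"obner degeneration, which is flat by construction: it sets $J = (x^2,\, xy+xz-y^2) + xy\,(x,y,z)^2$, checks that $\sat(J)=(x^2,\,xy+xz-y^2,\,xy)$ is the ideal of $2\times 2$ minors of a $2\times 3$ matrix of linear forms and hence has Hilbert function $(1,3,3,3,\ldots)$, deduces that $J$ has Hilbert function $\h$ and lies in $\mathcal{X}^\circ$ because its two quadrics form a regular sequence, and then verifies $L=\mathrm{in}_{\mathrm{lex}}(J)$. To complete your proof you would need to supply such an explicit witness (or an equivalent flat family); everything else in your argument stands.
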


\begin{proof}
By Definitions \ref{DefinitionsLociPolynomial} and Lemma \ref{LemmaClosure} we have $\HH = \mathcal{X}\cup \mathcal{Y}$, 
and it follows from Propositions \ref{PropositionParametrizationXPolynomial} and \ref{PropositionParametrizationYPolynomial} that
$\mathcal{X}$ and   $\mathcal{Y}$  are distinct irreducible components.

The lexicographic ideal of  $\HH$ is  $ L = (x^2, xy, xz^2, y^4, y^3z)$.
Since $\sat(L) = (x,y^3)$, we have $L \in \mathcal{Y}$.
Now consider $
J = (x^2, xy+xz-y^2) + xy (x,y,z)^2.
$ 
Its saturation  $\sat(J) =(x^2,xy+xz-y^2,xy)$ is the ideal of minors of 
$$
\begin{pmatrix} 
x & y & y + z \\
0 & x & y 
\end{pmatrix}
$$
and this shows that  $J$ has Hilbert function $\h$.
Moreover, 
$J \in \mathcal{X}^\circ$ because the two quadrics in $J$ form a regular sequence.
We have $L = \mathrm{in}_{\text{lex}}(J) $,
so by Gr\"obner degeneration we obtain
 $L \in \mathcal{X}$ as desired.
\end{proof}

\section{A second construction of a singular lexicographic point for the polynomial ring}\label{SectionSecondPolynomialRing}

In the previous section we constructed a singular standard graded Hilbert scheme starting from the Grothendieck Hilbert scheme of points which contains the first infinitesimal neighborhood of the origin in $\P^2$.
In this section, by analyzing an analogous situation  for $\P^3$,
we present a second construction of a standard graded Hilbert scheme that is even  more pathological,
since it shows that singular  lexicographic points may lie on components whose general points parametrize (saturated ideals of) reduced schemes, and
that lexicographic points may have non-Cohen--Macaulay singularities.

Let $S = \kk[x,y,z,w] $ be  the polynomial ring in 4 variables over $\kk$,
and  consider the standard graded Hilbert scheme 
$
\HH = \HH^{\h}(S),
$ 
parametrizing the ideals of $S$ with Hilbert function  $
\h = (1,4,4,4, \ldots)$.
 The lexicographic ideal of $\HH$ is   
 $$
 L = (x^2,xy,xz,xw,y^2,yz,yw^2,z^4).
 $$

As before, 
we consider the saturation map  $\sat : \HH \rightarrow \Hilb^4(\P^3)$.
It is well-known that $\Hilb^4(\P^3)$  is irreducible of dimension 12, 
and it contains  an open smooth subscheme $\mathcal{U}_1$ parametrizing  reduced collections of 4 points in $\P^3$.
Moreover, $\h$ is the ``generic'' Hilbert function in $\Hilb^4(\P^3)$,
that is, 
there is an open  subscheme $\mathcal{U}_2\subseteq \Hilb^4(\P^3)$ such that 
every $I \in \mathcal{U}_2$ has  Hilbert function $\h$.
The restriction of the saturation map to
the open subscheme $\sat^{-1}(\mathcal{U}_2)\subseteq \HH$ is the identity map, and therefore an isomorphism.
Denote by $\mathcal{X}$  the closure in $\HH$ of $\sat^{-1}(\mathcal{U}_2)$.
By the discussion above, $\mathcal{X}$   is an irreducible component of $\HH$ of dimension 12.

\begin{lemma}
The lexicographic ideal $L $ belongs to the component $\mathcal{X}$.
\end{lemma}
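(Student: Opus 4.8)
The plan is to mimic the proof of Theorem \ref{TheoremPolynomial}: exhibit a saturated ideal $J$ with Hilbert function $\h$ lying in $\mathcal{U}_2$ and satisfying $\mathrm{in}_{\mathrm{lex}}(J) = L$; then Gr\"obner degeneration produces a one-parameter family inside $\HH$ with general fiber $J \in \mathcal{X}$ and special fiber $L$, and since $\mathcal{X}$ is closed we conclude $L \in \mathcal{X}$. First I would record the relevant combinatorics of $L$. The standard monomials (the complement of $L$) in degrees $2,3,4$ are precisely the four lex-smallest monomials in each degree, namely $\{yw,z^2,zw,w^2\}$, $\{z^3,z^2w,zw^2,w^3\}$, and $\{z^3w,z^2w^2,zw^3,w^4\}$, while the minimal generators of $L$ appearing in each degree are $x^2,xy,xz,xw,y^2,yz$ in degree $2$, then $yw^2$ in degree $3$, and $z^4$ in degree $4$.

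For $J$ I would take the ideal of four distinct reduced points $p_1,\ldots,p_4 \in \P^3$ in linearly general position, with pairwise distinct ratios $[z_i:w_i]$ and all $w_i \ne 0$. Such a configuration exists: for instance the points $[i^2:i^3:i:1]$ with $i=1,2,3,4$ have an invertible (Vandermonde) coordinate matrix, so they span $\P^3$. Four points in linearly general position have Hilbert function exactly $\h$, so $J$ is a reduced saturated ideal lying in $\mathcal{U}_1 \cap \mathcal{U}_2$, whence $J \in \sat^{-1}(\mathcal{U}_2) \subseteq \mathcal{X}$.

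The heart of the argument is the identification $\mathrm{in}_{\mathrm{lex}}(J) = L$, which I would carry out degree by degree using the following elementary principle: since $\dim_\kk (S/J)_d = 4$ for $d\geq 1$, the standard monomials of degree $d$ coincide with the four lex-smallest monomials of degree $d$ as soon as those four are linearly independent in $(S/J)_d$, that is, as soon as the $4\times 4$ matrix $M_d$ of their values at $p_1,\ldots,p_4$ is invertible. The matrices $M_3$ and $M_4$ involve only the $z,w$-coordinates and equal, up to a diagonal factor $\mathrm{diag}(w_i)$, a Vandermonde matrix in the distinct ratios $[z_i:w_i]$, hence are invertible; invertibility of $M_2$, the only genuinely mixed condition, holds for the chosen points (indeed for $[i^2:i^3:i:1]$ all three matrices are literally Vandermonde). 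Granting these, the standard monomials in degrees $2,3,4$ are exactly those listed above, which forces each of the eight generators $x^2,xy,xz,xw,y^2,yz,yw^2,z^4$ of $L$ to be the leading term of an element of $J$. Therefore $L \subseteq \mathrm{in}_{\mathrm{lex}}(J)$, and since both ideals have Hilbert function $\h$, equality follows.

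Finally, I would invoke Gr\"obner degeneration to conclude $L\in\mathcal X$. The main obstacle is the middle step, verifying $\mathrm{in}_{\mathrm{lex}}(J)=L$, and within it the only delicate point is exhibiting points for which $M_2$ is invertible while the configuration stays in linearly general position; this is where the explicit choice (or a short genericity argument) is needed, the degree $3$ and degree $4$ conditions being automatic from the distinctness of the ratios $[z_i:w_i]$ and the non-vanishing of the $w_i$.
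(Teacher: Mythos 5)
Your strategy is the same as the paper's: realize $L$ as $\mathrm{in}_{\mathrm{lex}}(J)$ for a saturated ideal $J$ of four reduced points with Hilbert function $\h$, and conclude by Gr\"obner degeneration that $L$ lies in the closed set $\mathcal{X}$. The only difference is in how the key identity $\mathrm{in}_{\mathrm{lex}}(J)=L$ is obtained: the paper simply cites \cite[Theorem 1.2]{ConcaSidman}, which gives $\mathrm{in}_{\mathrm{lex}}(I)=L$ for a general reduced $I\in\mathcal{U}_1$, whereas you verify it by hand for an explicit configuration on the rational normal curve. Your verification is sound: the greedy characterization of standard monomials (a monomial is standard iff its image in $(S/J)_d$ is independent of the images of all lex-smaller monomials) does reduce the claim to invertibility of the three evaluation matrices $M_2,M_3,M_4$, and for points $[t_i^2:t_i^3:t_i:1]$ all three are Vandermonde in the $t_i$ (up to the diagonal factors $\mathrm{diag}(w_i^d)$), so the computation closes. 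What your approach buys is a self-contained, characteristic-independent-in-principle argument that avoids the external citation; what it costs is the explicit bookkeeping. One small repair: the literal choice $t_i=i$ for $i=1,\dots,4$ fails in characteristic $3$ (there $1=4$, so the points are not distinct and the Vandermonde determinant $12$ vanishes), and the paper only excludes characteristic $2$; since $\kk$ is algebraically closed, hence infinite, you should instead take four distinct scalars $t_1,\dots,t_4\in\kk$, after which the identical Vandermonde argument applies verbatim.
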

\begin{proof}
By \cite[Theorem 1.2]{ConcaSidman}
we have
$\mathrm{in}_{\mathrm{lex}}(I) = L$
for general $I \in \mathcal{U}_1$,
hence also for general $I \in \mathcal{U}_2$,
and this implies the desired statement.
\end{proof}

We now adopt the strategy of  \cite{Iarrobino}:
we describe  a locus too large to fit in the (smoothable) component  $\mathcal{X}$, 
yielding that $\HH$ is reducible and non-equidimensional.
As a byproduct of this description, we also see that the lexicographic ideal must belong to the intersection of at least two components. 

\begin{prop}
There exists a component $\mathcal{Y}$ of $\HH$ which contains the lexicographic ideal $L$ and has $\dim \mathcal{Y}\geq 14$.
\end{prop}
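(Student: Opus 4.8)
The plan is to produce an irreducible family inside $\HH$ of dimension $14$ that contains $L$; since $\dim \mathcal{X}=12$, such a family cannot fit inside $\mathcal{X}$, so it must lie in a different component $\mathcal{Y}$, which then satisfies $\dim\mathcal{Y}\geq 14$ and contains $L$. As in Section~\ref{SectionPolynomialRing}, I would organize the family through the saturation map $\sat\colon \HH\to\Hilb^4(\P^3)$, stratifying $\HH$ according to the Hilbert function of $\sat(I)$. A length-$4$ subscheme of $\P^3$ has Hilbert function $(1,4,4,\ldots)$, $(1,3,4,4,\ldots)$, or $(1,2,3,4,4,\ldots)$, according to whether it spans $\P^3$, spans a plane, or lies on a line. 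The first stratum is contained in $\mathcal{U}_2$ and yields $\mathcal{X}$, while a direct fiber count shows the planar stratum contributes dimension at most $11$. The decisive observation is that the collinear stratum, although it sits over a smaller base, carries a large fiber.

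Concretely, I would let $B\subseteq\Hilb^4(\P^3)$ be the locus of length-$4$ subschemes supported on a line. Sending such a subscheme to the line it spans exhibits $B$ as a $\P^4$-bundle (with fiber $\Hilb^4(\P^1)\cong\P^4$) over the Grassmannian $\Gr(2,4)$ of lines in $\P^3$, so $B$ is irreducible of dimension $4+4=8$. For $J\in B$ one has Hilbert function $(1,2,3,4,4,\ldots)$ and $\dim_\kk J_2=7$. I would then analyze the fiber: for $I\in\HH$ with $\sat(I)=J$, comparing Hilbert functions forces $I_1=0$, $I_d=J_d$ for all $d\geq 3$, and $I_2\subseteq J_2$ with $\dim_\kk I_2=6$. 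Conversely, for every $V\in\Gr(6,J_2)\cong\P^6$ the graded space with degree-$2$ part $V$ and agreeing with $J$ in degrees $\geq 3$ is an ideal (as $S_1 V\subseteq S_1 J_2\subseteq J_3$) with Hilbert function $\h$ and saturation $J$. Thus $\sat^{-1}(J)\cong\Gr(6,J_2)\cong\P^6$, and the incidence variety $\{(J,V): J\in B,\ V\in\Gr(6,J_2)\}$ is irreducible of dimension $8+6=14$ and maps injectively to $\HH$ (one recovers $J=\sat(I)$ and $V=I_2$). Calling the image $\mathcal{Y}^{\circ}$, this gives an irreducible locally closed subset of $\HH$ with $\dim\mathcal{Y}^{\circ}=14$.

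To place $L$ in this family, I would compute $\sat(L)=(x,y,z^4)$, the ideal of the length-$4$ curvilinear scheme supported at $[0:0:0:1]$ on the line $V(x,y)$; its Hilbert function is $(1,2,3,4,4,\ldots)$, so $\sat(L)\in B$, and one checks directly that $L_2$ is a $6$-dimensional subspace of $\sat(L)_2$ and that $L$ coincides with $L_2$ in degree $2$ and with $\sat(L)$ in degrees $\geq 3$. Hence $L\in\mathcal{Y}^{\circ}$. Finally, since $\overline{\mathcal{Y}^{\circ}}$ is irreducible of dimension $14>12=\dim\mathcal{X}$ and $\mathcal{X}$ is a component, $\overline{\mathcal{Y}^{\circ}}\not\subseteq\mathcal{X}$; therefore the generic point of $\overline{\mathcal{Y}^{\circ}}$ lies in a component $\mathcal{Y}\neq\mathcal{X}$ with $\overline{\mathcal{Y}^{\circ}}\subseteq\mathcal{Y}$, whence $\dim\mathcal{Y}\geq 14$ and $L\in\mathcal{Y}$.

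I expect the main obstacle to be the fiber analysis over the collinear stratum: proving that \emph{every} codimension-one subspace $V\subseteq J_2$ yields a genuine ideal with Hilbert function exactly $\h$, and that these exhaust $\sat^{-1}(J)$. This six-dimensional freedom, a whole $\Gr(6,7)$ worth of quadrics lying above the single saturated model, is exactly what makes the family too large for $\mathcal{X}$. A secondary point requiring care is confirming that, among the three strata, it is the collinear one that maximizes the sum of base and fiber dimensions, so that the construction genuinely detects a new component.
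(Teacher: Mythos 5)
Your proposal is correct and follows essentially the same route as the paper: the $14$-dimensional family you build over the collinear stratum (an $8$-dimensional base of degree-$4$ divisors on lines in $\P^3$, with fiber $\Gr(6,J_2)\cong\P^6$) is exactly the locus $\mathcal{Z}$ the paper constructs by choosing $\langle \ell_1,\ell_2\rangle\in\Gr(2,S_1)$, $V\in\Gr(6,(\ell_1,\ell_2)_2)$ and $\langle q\rangle\in\P^4$, with the identical dimension count $4+4+6=14$ and the same placement of $L$ via $\sat(L)=(x,y,z^4)$. The only difference is presentational: the paper writes the family by explicit generators rather than as fibers of the saturation map over the stratum of collinear schemes.
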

\begin{proof}
We describe a construction of ideals $I\in \HH$.
Let $\ell_1, \ell_2 \in S_1$ be  linearly independent 
and set $I'= (\ell_1, \ell_2)$;
we have $\dim_\kk I'_2 = 7$.
Let $V\subseteq I'_2$ be a subspace such that $\dim_\kk V =6$ and the ideal $(V)$ has codimension 2,
and  set $I'' = (V, I'_3)$.
Finally, let $q\in S_4$ be a quartic that is regular on $S/I'$,
and set $I= I''+(q)$.
The graded components of $I$ are
 $I_0=I_1 = 0, I_2 = V$,  and  $I_d = (I'_3, q)_d = (\ell_1, \ell_2, q)_d$ for  $d \geq 3$.
 Since $\dim_\kk S_2 = 10$ and $(\ell_1, \ell_2, q)$ has Hilbert function $(1, 2, 3, 4, 4, 4, \ldots)$,
 it follows that   $I$ has Hilbert function $\h$, so $I \in \HH$.

Denote by  $\mathcal{Z}\subseteq \HH$ be the locus of ideals of this form.
Thus,
ideals $I \in \mathcal{Z}$ are parametrized by choosing a subspace $\langle \ell_1, \ell_2 \rangle \in \Gr(2,S_1) \cong \Gr(2,4)$,
a general subspace $V \in \Gr(6, I'_2) \cong \P^6$,
and a general subspace $\langle q \rangle \in \Gr(1, S_4/I'_4) \cong \P^4$.
Moreover, all the choices are unique since $(\ell_1, \ell_2) = \sqrt{I}$, $V= I_2$, and 
$\langle q \rangle = I_4/(\ell_1,\ell_2)_4$.
This shows that the locus  $\mathcal{Z}$ is irreducible of dimension $4+6+4=14$.
Finally, we observe that $L \in \mathcal{Z}$ by choosing $\langle \ell_1, \ell_2 \rangle = \langle x, y \rangle$,
$V= \langle x^2, xy, xz, xw, y^2, yz\rangle$ and $q = z^4$.
This concludes the proof.
\end{proof}

We obtain the main result of this section.

\begin{thm}\label{TheoremRadicalPolynomial}
Let $S= \kk[x,y,z,w]$ and let $\h= (1,4,4,4,\ldots).$ 
The  standard graded  Hilbert scheme $\HH^\h(S)$  is reducible.
There exists a 12-dimensional irreducible component $\mathcal{X}$, whose general point parametrizes radical ideals,
and an irreducible  component $\mathcal{Y}$, 
which parametrizes non-saturated ideals  and has dimension at least $14$.
The lexicographic point lies in their intersection; 
in particular, the lexicographic point is singular and not Cohen--Macaulay.
\end{thm}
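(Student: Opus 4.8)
The plan is to synthesize the results established above—that $\mathcal{X}$ is a $12$-dimensional component (by the paragraph preceding the first lemma), that $\mathcal{Y}$ is a component of dimension at least $14$ containing the locus $\mathcal{Z}$ (by the proposition), and that $L$ lies on both (by the lemma and the proposition)—and then to deduce the two remaining assertions, namely non-saturation along $\mathcal{Y}$ and the failure of the Cohen--Macaulay property at $L$, from the dimension gap between the two components.

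First I would settle reducibility and singularity. Since $\dim \mathcal{X} = 12 < 14 \le \dim \mathcal{Y}$, the components $\mathcal{X}$ and $\mathcal{Y}$ are distinct, so $\HH$ is reducible and non-equidimensional. The lemma gives $L \in \mathcal{X}$ and the proposition gives $L \in \mathcal{Z} \subseteq \mathcal{Y}$, hence $L \in \mathcal{X} \cap \mathcal{Y}$. A point lying on two distinct irreducible components of a scheme is never a regular point, since a regular local ring is a domain and has a unique minimal prime; this yields the singularity of the lexicographic point.

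Next I would justify the geometric descriptions of the two components. For $\mathcal{X}$, the set $\sat^{-1}(\mathcal{U}_1 \cap \mathcal{U}_2)$ is open and dense in $\mathcal{X} = \overline{\sat^{-1}(\mathcal{U}_2)}$, because $\mathcal{U}_1$ and $\mathcal{U}_2$ are open and dense in $\Hilb^4(\P^3)$; on it the saturation map is the identity, so its points are saturated ideals of reduced $4$-point schemes, i.e.\ radical ideals, proving that the general point of $\mathcal{X}$ is radical. For $\mathcal{Y}$, I would first observe that the open locus $\sat^{-1}(\mathcal{U}_2)\subseteq \HH$ coincides with the saturated locus: if $\sat(I)\in\mathcal{U}_2$ then $\sat(I)$ has Hilbert function $\h$, and since $I \subseteq \sat(I)$ degree by degree while both quotients have Hilbert function $\h$, we get $I=\sat(I)$. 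As $\mathcal{Y}$ is irreducible and distinct from $\mathcal{X}=\overline{\sat^{-1}(\mathcal{U}_2)}$, it cannot meet the open set $\sat^{-1}(\mathcal{U}_2)$, for otherwise that intersection would be dense in $\mathcal{Y}$ and force $\mathcal{Y}\subseteq\mathcal{X}$. Hence every ideal parametrized by $\mathcal{Y}$ is non-saturated, consistent with the direct computation on $\mathcal{Z}$, where $\sat(I)=(\ell_1,\ell_2)$ has Hilbert function $(1,2,3,4,5,\ldots)\ne\h$.

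For the Cohen--Macaulay statement I would invoke the standard fact that a Cohen--Macaulay local ring is equidimensional, so all of its minimal primes have the same coheight, equal to its dimension. Were $\HH$ Cohen--Macaulay at $L$, the components through $L$—among them $\mathcal{X}$ of dimension $12$ and $\mathcal{Y}$ of dimension at least $14$—would all have the same dimension, which is false; here one uses that for an irreducible component $C$ of a finite-type $\kk$-scheme and a closed point $L\in C$ one has $\dim \mathcal{O}_{C,L}=\dim C$. Thus $\mathcal{O}_{\HH,L}$ is not Cohen--Macaulay. The substance of the theorem lies entirely in the earlier lemma and proposition; the one genuinely new ingredient is the equidimensionality of Cohen--Macaulay local rings, and the only point to watch is that the two components through $L$ really do have different dimensions, which is guaranteed by $\dim\mathcal{Y}\ge 14 > 12 = \dim\mathcal{X}$.
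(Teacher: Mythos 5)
Your proposal is correct and follows essentially the same route as the paper, which states this theorem as a direct synthesis of the preceding lemma ($L \in \mathcal{X}$) and proposition ($L \in \mathcal{Z} \subseteq \mathcal{Y}$ with $\dim \mathcal{Y} \geq 14$) without writing out a separate proof. You supply exactly the implicit details the paper omits: the density argument identifying general points of $\mathcal{X}$ with radical ideals, the openness of the saturated locus forcing $\mathcal{Y}$ to parametrize non-saturated ideals, and the unmixedness of Cohen--Macaulay local rings combined with the dimension gap $12 < 14$ to rule out the Cohen--Macaulay property at $L$.
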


\section{The standard graded Hilbert scheme of the exterior algebra}\label{SectionExteriorAlgebra}

This section is devoted to the proof of Theorem \ref{TheoremExterior}.
Let $E = \W E_1 $ be the exterior algebra of a 5-dimensional vector space
 $E_1 = \langle e_1, e_2, e_3, e_4, e_5 \rangle$ over $\kk$.
We consider the standard graded Hilbert scheme 
$
\HH = \HH^{(1,5,7,2)}(E),
$ 
parametrizing the ideals of $E$ with Hilbert function  $(1,5,7,2)$, 
in other words ideals $I\subseteq E$ such that
$$
 I_0 = I_1 = 0, \quad 
\dim_\kk I_2 = 3, \quad
 \dim_\kk I_3 = 8, \quad 
 I_4 = E_4, \quad
 I_5 = E_5.
$$

We begin by reviewing some basic facts about exterior algebras.
A quadric $q\in E_2$  can be identified with a $5\times 5$ skew-symmetric matrix, and 
thus it has  even rank.
More specifically, we have 
 $\rank(q) = 2 $ if and only if $ q = \ell_1 \w \ell_2 $ 
for some $\ell_i \in E_1$ such that
$\langle \ell_1, \ell_2 \rangle = \ker(q : E_1 \rightarrow E_3 ) \in \Gr(2,E_1)$.
Likewise, we have $\rank(q) = 4 $ if and only if $ q = \ell_1 \w \ell_2 + \ell_3 \w \ell_4  $ 
for some $\ell_i\in E_1$ with $ \langle \ell_1, \ell_2, \ell_3, \ell_4 \rangle =\ker(q^2:E_1 \rightarrow E_5)\in \Gr(4,E_1)$.

\begin{lemma}\label{LemmaBasicFacts}
Let $\ell_1, \ldots, \ell_5\in E_1$, 
 let $q, q_1, q_2 \in E_2$, 
and let $V\subseteq E_1$ be a subspace.
\begin{enumerate}
\item $\langle q_1 , q_2 \rangle$ is a pencil of rank 2 quadrics $\Leftrightarrow \langle q_1 , q_2 \rangle = \langle \ell_1\w \ell_2 , \ell_1\w \ell_3 \rangle$ for some $\ell_i\in E_1 $ such that $\dim_\kk \langle \ell_1, \ell_2, \ell_3 \rangle = 3$.
\item $\rank(q) \leq 2 \Leftrightarrow q^2 = 0$.
\item Let $q = \ell_1 \w \ell_2 $ be of rank $2$, 
then $q \in \W^2 V \Leftrightarrow \ell_1, \ell_2\in V$.
\item Let $q = \ell_1 \w \ell_2 + \ell_3 \w \ell_4$ be of rank $4$, 
then $q \in \W^2 V \Leftrightarrow \ell_1, \ell_2, \ell_3, \ell_4 \in V$.
\item Let $q = \ell_1 \w \ell_2 + \ell_3 \w \ell_4$ be of rank $4$, 
then $\ell_5 \w q^2 =0  \Leftrightarrow \ell_5 \in \langle \ell_1, \ell_2, \ell_3, \ell_4 \rangle$.
\end{enumerate}
\end{lemma}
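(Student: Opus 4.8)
The plan is to prove each of the five equivalences in Lemma \ref{LemmaBasicFacts} by reducing to the normal-form description of skew-symmetric forms recalled just before the statement, namely that a quadric $q \in E_2$ of rank $2$ is exactly a decomposable element $\ell_1 \w \ell_2$ with kernel $\langle \ell_1, \ell_2 \rangle$, and a quadric of rank $4$ is $\ell_1 \w \ell_2 + \ell_3 \w \ell_4$ with $\langle \ell_1, \ell_2, \ell_3, \ell_4\rangle = \ker(q^2 : E_1 \to E_5)$. Throughout I would work in a basis adapted to $q$, since each claim is invariant under the action of $\mathrm{GL}(E_1)$ on $E$.

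For part (2), the forward direction is immediate: if $\rank(q) \leq 2$ then either $q = 0$ or $q = \ell_1 \w \ell_2$, and squaring a decomposable $2$-form gives $q^2 = \ell_1 \w \ell_2 \w \ell_1 \w \ell_2 = 0$. For the converse I would use the rank classification: a nonzero $q$ of rank $4$ has $q^2 = 2\,\ell_1\w\ell_2\w\ell_3\w\ell_4 \neq 0$, so $q^2 = 0$ forces $\rank(q) \leq 2$; here I need $\mathrm{char}(\kk) \neq 2$, which is exactly the standing hypothesis. Part (1) then follows by combining (2) with a dimension count: if $\langle q_1, q_2\rangle$ is a pencil of rank $2$ quadrics, every element is decomposable, and I would show the two kernels $\langle \ell_1, \ell_2\rangle$ and $\langle \ell_1', \ell_2'\rangle$ must share a common line, for otherwise a generic combination would have rank $4$ (its kernel being the intersection of the two $2$-planes, which is $0$ when they are transverse). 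Writing the common line as $\ell_1$ gives the stated normal form $\langle \ell_1 \w \ell_2, \ell_1 \w \ell_3\rangle$ with $\ell_1, \ell_2, \ell_3$ independent; the converse is a direct check that every combination $a(\ell_1\w\ell_2) + b(\ell_1\w\ell_3) = \ell_1 \w(a\ell_2 + b\ell_3)$ is decomposable.

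Parts (3) and (4) are the membership criteria $q \in \W^2 V$. The direction $(\Leftarrow)$ is trivial in both cases since $\W^2 V$ is spanned by wedges of elements of $V$. For $(\Rightarrow)$ the key point is that the linear span of $q$ — meaning the smallest subspace $W \subseteq E_1$ with $q \in \W^2 W$ — is a well-defined invariant equal to the kernel description already recalled: for rank $2$ it is $\langle \ell_1, \ell_2\rangle = \ker(q)$, and for rank $4$ it is $\langle \ell_1, \ell_2, \ell_3, \ell_4\rangle = \ker(q^2)$. I would establish this minimality by a basis argument: extend a basis of $V$ and expand $q$ in the induced basis of $\W^2 E_1$; the condition $q \in \W^2 V$ kills all coordinates involving a basis vector outside $V$, and comparing with the normal form shows every $\ell_i$ lies in $V$. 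Part (5) is the analogous contraction statement: since $q^2 = 2\,\ell_1\w\ell_2\w\ell_3\w\ell_4$ spans $\W^4\langle\ell_1,\ldots,\ell_4\rangle$, wedging with $\ell_5$ gives zero precisely when $\ell_5$ is linearly dependent on $\ell_1, \ldots, \ell_4$, which is the standard fact that $\ell_5 \w (\ell_1 \w \ell_2 \w \ell_3 \w \ell_4) = 0$ iff $\ell_5 \in \langle \ell_1, \ell_2, \ell_3, \ell_4\rangle$ in a $5$-dimensional space.

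The main obstacle, such as it is, lies in part (1): proving that a pencil of rank $2$ quadrics must have a common linear factor, rather than just verifying the normal form. The clean way is to argue that if the kernels of $q_1$ and $q_2$ are transverse $2$-planes, then a generic $q_1 + \lambda q_2$ has $4$-dimensional span and hence rank $4$, contradicting that the pencil consists entirely of rank $2$ quadrics; this uses (2) to detect rank via the nonvanishing of the square. Everything else reduces to the normal forms recalled before the lemma together with the characteristic hypothesis, so the proof should be short once part (1) is handled carefully.
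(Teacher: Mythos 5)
Your proposal is correct and follows essentially the same route as the paper, which proves all five parts from the normal forms for rank $2$ and rank $4$ quadrics: part (1) via the possible intersections of the two kernel planes in $\Gr(2,E_1)$, and parts (2), (4), (5) via the square $q^2$ (using $\mathrm{char}(\kk)\neq 2$). The only cosmetic difference is in the forward direction of (4), where the paper observes directly that $q\in\W^2 V$ forces $q^2=\ell_1\w\ell_2\w\ell_3\w\ell_4\in\W^4 V$, while you phrase the same minimality-of-support fact as a basis expansion.
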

\begin{proof}
The statements follow  from the classification of quadrics discussed above.
For instance, (1) follows by inspecting the possible intersections of two subspaces $V_1, V_2 \in \Gr(2,E_1)$.
For item (4), the backward direction is obvious,
while the forward direction follows since if $q \in \W^2 V $ then $q^2 = \ell_1 \w \ell_2 \w \ell_3 \w \ell_4 \in \W^4V$.
\end{proof}

A central theme in this section is the classification of  ideals based on the existence of pencils of rank 2 quadrics.

\begin{lemma} \label{LemmaNoRank2Pencil}
Let $I\subseteq E$ be an ideal with Hilbert function $(1,5,7,2)$.
If $I$ contains no pencil of rank  $2$ quadrics,
then $I_2 \subseteq \bigwedge^2 V \subseteq E_2$ for some subspace $V \in \Gr(4,E_1)$.
\end{lemma}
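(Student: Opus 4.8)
The plan is to convert the defining condition $\dim_\kk I_3 = 8$ into a statement about the ``wedge-annihilator'' of $I_2$ inside $E_2$, and then run a short case analysis governed by the ranks of the quadrics appearing in that annihilator. This is essential because, for a \emph{generic} $3$-dimensional $I_2$, there is no $4$-dimensional $V$ with $I_2 \subseteq \bigwedge^2 V$; the extra structure must come from the fact that $I$ is an ideal with the prescribed Hilbert function.

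First I would record that, since $I$ is an ideal, $E_1 \wedge I_2 \subseteq I_3$, whence $\dim_\kk (E_1 \wedge I_2) \le \dim_\kk I_3 = 8$. Next I would dualize using the perfect pairing $E_2 \times E_3 \to \bigwedge^5 E_1 \cong \kk$. Setting
\[
W = \{\, q' \in E_2 : q' \wedge q = 0 \text{ for all } q \in I_2 \,\},
\]
a direct check, using that $\ell \wedge \omega = 0$ for all $\ell \in E_1$ forces $\omega = 0$ when $\omega \in \bigwedge^4 E_1$, identifies $W$ with the annihilator $(E_1 \wedge I_2)^{\perp} \subseteq E_2$. Hence $\dim_\kk W = 10 - \dim_\kk(E_1 \wedge I_2) \ge 2$. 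This is the only place where the constraint on $I_3$ is used, and it is exactly what guarantees a nonzero $W$ to work with.

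With $\dim_\kk W \ge 2$ in hand, I would split on the maximal rank occurring in $W$. If $W$ contains a quadric $q'$ of rank $4$, then $q' \wedge q = 0$ for every $q \in I_2$; a direct computation (cf.\ Lemma \ref{LemmaBasicFacts}) shows that $q' \wedge q = 0$ forces $q$ to be supported on the rank-$4$ support $V_{q'} = \ker(q'^2 : E_1 \to E_5)$, i.e.\ $q \in \bigwedge^2 V_{q'}$. Thus $I_2 \subseteq \bigwedge^2 V_{q'}$ with $V_{q'} \in \Gr(4, E_1)$, which is precisely the desired conclusion.

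The hard part will be the remaining case, where every nonzero element of $W$ has rank $2$; this is where the no-pencil hypothesis must be brought in. Here any $2$-dimensional subspace of $W$ has all nonzero elements of rank $2$, hence is a pencil of rank $2$ quadrics, and by Lemma \ref{LemmaBasicFacts}(1) it takes the form $\langle \ell_1 \wedge \ell_2, \ell_1 \wedge \ell_3 \rangle$ with $\ell_1, \ell_2, \ell_3$ linearly independent. Since every $q \in I_2$ wedge-annihilates both generators, I would compute the intersection $\{q : q \wedge \ell_1 \wedge \ell_2 = 0\} \cap \{q : q \wedge \ell_1 \wedge \ell_3 = 0\}$ and check that it equals the $5$-dimensional space $U = (\ell_1 \wedge E_1) \oplus \langle \ell_2 \wedge \ell_3 \rangle$. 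Then $I_2 \subseteq U$ with $\dim_\kk I_2 = 3$, so projecting $U$ onto the line $U/(\ell_1 \wedge E_1)$ forces $\dim_\kk\big(I_2 \cap (\ell_1 \wedge E_1)\big) \ge 2$. But every nonzero element of $\ell_1 \wedge E_1$ has rank $2$, so this intersection is a pencil of rank $2$ quadrics lying inside $I_2$, contradicting the hypothesis. Hence this case is impossible, and the proof is complete.
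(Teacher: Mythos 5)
Your proof is correct, and it takes a genuinely different route from the paper's. The paper works directly inside $I_2$: the no-pencil hypothesis forces $I_2$ to contain a rank $4$ quadric $q_1$, one normalizes $q_1=e_1\w e_2+e_3\w e_4$, writes the remaining generators as $q_i=p_i+\ell_i\w e_5$ with $p_i\in\W^2 V$, and then a careful count inside $I_3$ (using $\dim_\kk I_3=8$ and, once more, the no-pencil hypothesis to exclude a degenerate subcase) shows $\ell_2=\ell_3=0$. You instead dualize: the perfect pairing $E_2\times E_3\to\W^5E_1$ identifies $W=\{q'\in E_2: q'\w I_2=0\}$ with $(E_1\w I_2)^\perp$, so $\dim_\kk I_3=8$ gives $\dim_\kk W\ge 2$, and the case split happens in $W$ rather than in $I_2$. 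Your rank $4$ case is immediate (I checked the coordinate computation: $q'\w q=0$ with $\rank q'=4$ does force $q\in\W^2\ker(q'^2)$), and your rank $2$ case correctly turns a pencil $\langle\ell_1\w\ell_2,\ell_1\w\ell_3\rangle\subseteq W$ into the $5$-dimensional constraint $I_2\subseteq(\ell_1\w E_1)\oplus\langle\ell_2\w\ell_3\rangle$, whence $\dim_\kk(I_2\cap(\ell_1\w E_1))\ge 2$ and $I_2$ itself contains a forbidden pencil. The only cosmetic caveat is that the implication ``$q'\w q=0$, $\rank q'=4$ $\Rightarrow$ $q\in\W^2\ker(q'^2)$'' is not literally an item of Lemma \ref{LemmaBasicFacts}, so the direct computation you allude to should be displayed rather than cited. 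What your approach buys is symmetry and brevity: both cases are dispatched by the same rank dichotomy applied to $W$, with no need to normalize generators of $I_2$ or track the components $p_i,\ell_i$. What the paper's hands-on setup buys is that the decomposition $q=\ell_1\w e_1+q'$ relative to a chosen $V$ is reused verbatim in the proof of Lemma \ref{LemmaRank2Belongs}, so the two proofs share their notation and casework.
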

\begin{proof}
Let $I_2 = \langle q_1, q_2, q_3\rangle$.
The ideal $I$ contains   rank $4$ quadrics,
hence we may assume  $q_1 = e_1 \w e_2 + e_3 \w e_4$.
We claim that $I_2 \subseteq \bigwedge^2 V$ where $ V = \langle e_1, e_2, e_3, e_4 \rangle$.

We write 
$q_2 = p_2 + \ell_2 \w e_5$, $q_3 = p_3 + \ell_3 \w e_5$ where $p_2, p_3 \in \W^2 V$ and $\ell_2, \ell_3 \in V$.
Observe that $q_1 \w V = \W^3 V \subseteq I_3$. 
Taking the image of $I_3 \subseteq E_3$ modulo $\W^3 V$  
we obtain a subspace
$ U  \w e_5 \subseteq  \W^2 V \w e_5$ such that  $U \subseteq \W^2 V$ and $\dim_\kk U = \dim_\kk I_3 - \dim_\kk \W^3 V = 8-4=4$.
Since $I_3 $ contains $e_5 \w \langle q_1, q_2, q_3\rangle$ and $V \w \langle q_2, q_3 \rangle$,
computing  images modulo  $\W^3 V$ we see that 
$U$ contains  $\langle q_1, p_2,  p_3 \rangle$ and $V \w \langle \ell_2, \ell_3 \rangle$.
We claim that $\ell_2=\ell_3=0$.
The linear forms $\ell_2, \ell_3\in V$ cannot be linearly independent, otherwise $U$ would contain the 5-dimensional subspace $V \w\langle \ell_2, \ell_3 \rangle$.
Thus $\ell_2, \ell_3$ are linearly dependent and, 
up to subtracting a multiple of $q_2$ from $q_3$ or viceversa,
we may assume that $\ell_3 =0$.
Assume by contradiction $\ell_2 \ne 0$.
Then $V \w \ell_2 $ has dimension 3.
Moreover, $q_1 \notin V \w \ell_2 $ since $q_1$ has rank 4 and thus it is not reducible. 
Since $\dim_\kk U =4$,  we conclude that $U = \langle q_1\rangle +  V\w \ell_2$,
hence  $p_2, p_3 \in   \langle q_1\rangle +  V\w \ell_2$.
We may subtract multiples of $q_1 $ from $q_2, q_3$ and assume that 
 $p_2, p_3 \in   V\w \ell_2$.
 But this means that $p_2, p_3$ are divisible by $\ell_2$, hence the same is true for $q_2, q_3$, and they form a pencil of quadrics, contradiction. 

We have proved that 
$\ell_2=\ell_3=0$, which implies  $q_2, q_3 \in \W^2 V$ as claimed.
\end{proof}

Inspired by Lemma \ref{LemmaNoRank2Pencil}, we 
define the following two loci in the Hilbert scheme.

\begin{definitions}\label{DefinitionsLociExterior}
Let $\mathcal{X}$ be the closure in $\HH$ of the locus $\mathcal{X}^{\circ}$ consisting of ideals 
$$
 I = (\ell_1 \w \ell_2, \ell_1 \w \ell_3, q)
$$
where $\ell_1, \ell_2, \ell_3 \in E_1$,
$q \in E_2$.

Let $\mathcal{Y}$ be the closure in $\HH$ of the locus $\mathcal{Y}^{\circ}$ consisting of ideals 
$$
I = (q_1,q_2,q_3,c)
$$ 
where $q_1, q_2, q_3 \in E_2$, $c\in E_3$ are such that
 $I_2 \subseteq  \W^2 V$ for some $V\in \Gr(4,E_1)$,
but $I_2 \not\subseteq  \W^2 W$ for all $W\in \Gr(3,E_1)$.
\end{definitions}

\begin{lemma}\label{LemmaOpenCondition}
Let $U \in \Gr(3, E_2)$ be such that $\dim_\kk (U \w E_1) \geq 7$.
Then $U \subseteq \W^2 W$ for some $W \in \Gr(3,E_1)$ if and only if $U^2 = 0$.
\end{lemma}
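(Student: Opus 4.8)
The plan is to prove the two implications separately, with the forward direction immediate and the reverse direction relying on the hypothesis $\dim_\kk(U\w E_1)\ge 7$ in an essential way. For the forward direction I would not use the hypothesis at all: if $U\subseteq \W^2 W$ for some $W\in\Gr(3,E_1)$, then $\dim_\kk \W^2 W=\binom{3}{2}=3=\dim_\kk U$ forces $U=\W^2 W$, and every product of two elements of $U$ lies in $\W^2 W\w\W^2 W\subseteq \W^4 W=0$ because $\dim_\kk W=3<4$. Hence $U^2=0$.

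For the reverse implication I would first reduce $U^2=0$ to a statement about individual quadrics. Since $\mathrm{char}(\kk)\ne 2$, the vanishing $U^2=0$ is equivalent to $q^2=0$ for every $q\in U$, so by Lemma \ref{LemmaBasicFacts}(2) each nonzero $q\in U$ has rank $2$, i.e.\ is decomposable. Writing $U=\langle q_1,q_2,q_3\rangle$ with $\langle q_i\rangle=\W^2 P_i$ for the planes $P_i=\ker(q_i:E_1\to E_3)\in\Gr(2,E_1)$, linear independence of the $q_i$ makes the $P_i$ distinct, and the relations $q_i\w q_j=0$ for $i\ne j$ force $\dim_\kk(P_i+P_j)\le 3$ (the product $q_i\w q_j$ is a generator of $\W^4(P_i+P_j)$, which vanishes exactly when $\dim_\kk(P_i+P_j)\le 3$). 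Distinctness then yields $\dim_\kk(P_i\cap P_j)=1$ for every pair.

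The heart of the argument is a short analysis of this configuration of three pairwise-incident planes, producing precisely the dichotomy underlying the two loci of Definitions \ref{DefinitionsLociExterior}. Either all three lines $P_i\cap P_j$ coincide in a common line $\langle\ell\rangle\subseteq P_1\cap P_2\cap P_3$, in which case $U\subseteq \ell\w E_1$; or they do not, and then, writing $P_1=\langle a,b\rangle$ with $\langle a\rangle=P_1\cap P_2$ and $\langle b\rangle=P_1\cap P_3$, the remaining relation $q_2\w q_3=0$ forces $W:=P_1+P_2+P_3$ to be $3$-dimensional (rather than $4$-dimensional), whence $U\subseteq \W^2 W$ and so $U=\W^2 W$ as in the forward direction. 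To finish I would exclude the first alternative using the hypothesis: if $U\subseteq \ell\w E_1$ then $U\w E_1\subseteq \ell\w\W^2 E_1$, and since the kernel of $\ell\w(-):\W^2 E_1\to \W^3 E_1$ is the $4$-dimensional space $\ell\w E_1$, we obtain $\dim_\kk(U\w E_1)\le 10-4=6<7$, contradicting $\dim_\kk(U\w E_1)\ge 7$. Thus only the second alternative survives and $U=\W^2 W$. I expect the main obstacle to be making the planar configuration analysis watertight across degenerate cases—in particular verifying that whenever two of the three intersection lines agree one is necessarily in the common-line case—whereas the dimension estimate that rules out that case, which is the unique place the hypothesis $\dim_\kk(U\w E_1)\ge 7$ is used, is routine.
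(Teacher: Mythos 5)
Your proof is correct and follows essentially the same route as the paper's: both reduce $U^2=0$ to every quadric in $U$ being decomposable via Lemma \ref{LemmaBasicFacts}(2), analyze the resulting configuration of pairwise-incident planes (equivalently, common linear factors), and invoke the hypothesis $\dim_\kk(U\w E_1)\ge 7$ only to exclude the case of a common linear factor $\ell$, where $U\subseteq \ell\w E_1$ forces $\dim_\kk(U\w E_1)\le 6$. The paper organizes this as a contrapositive with normalized coordinates while you argue directly with the planes $P_i$, but the dichotomy and the decisive dimension count are identical.
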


\begin{proof}
One direction is obvious: if $U \subseteq \W^2 W$ for some $W  \in \Gr(3,E_1)$,
then $U^2 \subseteq \W^4 W =0$.
Conversely, assume $U \not\subseteq \W^2 W$ for all $W \in \Gr(3,E_1)$,
it suffices to show  that $U$ contains a quadric $q$ with $\rank(q)=4$, 
 since then $q^2 \ne 0$.
Assume by contradiction $\rank(q) \leq 2$ for all $q \in U$.
Then any 2 quadrics in $U$ share a common factor, and without loss of generality $ U = \langle e_1 \w e_2, e_1 \w e_3, q \rangle$.
Since $q \notin \W^2 \langle e_1, e_2, e_3 \rangle$,  we may assume $q = \ell_1 \w e_4$.
If $\langle \ell_1 \rangle = \langle e_1 \rangle $, then $\dim_\kk(U\w E_1) =6$,
whereas if  $\langle \ell_1 \rangle \ne \langle e_1 \rangle $, then $\rank(e_1 \w \ell_2 + \ell_1 \w e_4) =4 $  for suitable $\ell_2 \in \langle e_2, e_3 \rangle$, yielding a contradiction in either case.
\end{proof}

\begin{lemma} \label{LemmaRank2Belongs} 
Let $I
\subseteq E$ be an ideal with Hilbert function $(1,5,7,2)$ containing a pencil of rank $2$ quadrics. 
Then $I$ lies in $\mathcal{X} \cup \mathcal{Y}$.
\end{lemma}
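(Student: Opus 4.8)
The plan is to normalize the pencil and then organize the argument around a single dichotomy---whether $I$ is generated in degree $2$---refined by how $I_2$ sits inside the spaces $\W^2 V$, so as to match the defining conditions of $\mathcal{X}^{\circ}$ and $\mathcal{Y}^{\circ}$ in Definitions~\ref{DefinitionsLociExterior}. By Lemma~\ref{LemmaBasicFacts}(1) and a change of basis I would assume the given pencil of rank $2$ quadrics is $\langle e_1\w e_2,\, e_1\w e_3\rangle$, so that $I_2=\langle e_1\w e_2,\,e_1\w e_3,\,q\rangle$ for some $q\notin\langle e_1\w e_2,e_1\w e_3\rangle$. The quantity that controls everything is $\dim_\kk(I_2\w E_1)$, the dimension of the degree $3$ part of the ideal generated by the quadrics, which always satisfies $\dim_\kk(I_2\w E_1)\le\dim_\kk I_3=8$.

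If $\dim_\kk(I_2\w E_1)=8$, then $I_3$ coincides with $(I_2)_3$; since $I_2\w E_1$ already spans $E_4$ (hence also $E_5$), the ideal $I$ is generated in degree $2$, so $I=(e_1\w e_2,e_1\w e_3,q)$ has exactly the form of the ideals in $\mathcal{X}^{\circ}$ and we conclude $I\in\mathcal{X}$. The substantive case is $\dim_\kk(I_2\w E_1)\le 7$, in which $I$ acquires a genuine cubic generator. The pivot step is to show that this inequality forces $I_2\subseteq\W^2 V$ for some $V\in\Gr(4,E_1)$: writing $q=p+(u_4\w e_4+u_5\w e_5)+\lambda\,e_4\w e_5$ with $p\in\W^2\langle e_1,e_2,e_3\rangle$, $u_4,u_5\in\langle e_1,e_2,e_3\rangle$ and $\lambda\in\kk$, a direct computation of $q\w E_1$ modulo the five cubics supplied by the pencil shows that if $\lambda\ne 0$ or $u_4,u_5$ are linearly independent then already $\dim_\kk(I_2\w E_1)\ge 8$; the complementary configurations are precisely those with $I_2\subseteq\W^2 V$.

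Granting $I_2\subseteq\W^2 V$, I would split using Lemma~\ref{LemmaOpenCondition}. When $\dim_\kk(I_2\w E_1)=7$ that lemma applies to $U=I_2$: if $I_2^2=0$ then $I_2=\W^2 W$ for some $W\in\Gr(3,E_1)$, whereas if $I_2^2\ne 0$ then $I_2\not\subseteq\W^2 W$ for every $W\in\Gr(3,E_1)$, so $I_2$ meets the defining condition of $\mathcal{Y}^{\circ}$; in the latter subcase $I$ has a single cubic generator and therefore lies in $\mathcal{Y}^{\circ}\subseteq\mathcal{Y}$. The only remaining configuration is $\dim_\kk(I_2\w E_1)=6$, which, as one checks, forces all quadrics of $I_2$ to share the common linear factor of the pencil, so that $I_2=e_1\w\langle e_2,e_3,e_4\rangle$ after a change of basis; this again satisfies the $\mathcal{Y}^{\circ}$ condition on $I_2$, but now $I$ carries two cubic generators.

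The two configurations $I_2=\W^2 W$ and $I_2=e_1\w\langle e_2,e_3,e_4\rangle$ do not lie in the open strata, and handling them is where I expect the main difficulty. For each I would produce, in explicit coordinates, a one-parameter family $I(t)$ lying in $\mathcal{X}^{\circ}$, respectively $\mathcal{Y}^{\circ}$, with flat limit $I(0)=I$, in the spirit of the degenerations carried out in Section~\ref{SectionPolynomialRing}; closedness of $\mathcal{X}$ and $\mathcal{Y}$ then gives $I\in\mathcal{X}\cup\mathcal{Y}$. Concretely, for $I_2=\W^2 W$ one deforms the third quadric out of $\W^2 W$, for instance replacing $e_2\w e_3$ by $e_2\w e_3+t\,e_4\w e_5$, so that for $t\ne 0$ the pencil and the perturbed rank $4$ quadric generate $I(t)$ in degree $2$, while as $t\to 0$ the extra degree $3$ direction produced by the deformation recovers the cubic generator of $I$; for $I_2=e_1\w\langle e_2,e_3,e_4\rangle$ one deforms $I_2$ to a generic $3$-dimensional subspace of $\W^2 V$, say $\langle e_1\w e_2,\,e_1\w e_3,\,e_1\w e_4+t\,e_2\w e_3\rangle$, which lies in $\mathcal{Y}^{\circ}$ for $t\ne 0$ and absorbs one of the two cubic generators in the limit. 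The crux in both constructions is to arrange, using the residual freedom in $\mathrm{GL}(E_1)$ fixing $I_2$, that the special fiber is exactly the given $I$ and that the Hilbert function stays equal to $(1,5,7,2)$ along the whole family.
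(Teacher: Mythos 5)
Your skeleton is sound up through the reduction: the dichotomy on $\dim_\kk(I_2\w E_1)$, the equivalence of $\dim_\kk(I_2\w E_1)\le 7$ with $I_2\subseteq\W^2 V$ for some $V\in\Gr(4,E_1)$, and the identification of exactly two residual configurations, $I_2=\W^2 W$ and $I_2=e_1\w\langle e_2,e_3,e_4\rangle$, all check out and run parallel to (though are organized differently from) the paper's case analysis. The gap is in the two explicit degenerations, i.e.\ precisely where you flagged the difficulty. For $I_2=\W^2 W=\langle e_1\w e_2, e_1\w e_3, e_2\w e_3\rangle$, the family $I(t)_2=\langle e_1\w e_2,\,e_1\w e_3,\,e_2\w e_3+t\,e_4\w e_5\rangle$ leaves $\HH$ entirely: for $t\ne 0$ the five cubics of $\langle e_1\w e_2,e_1\w e_3\rangle\w E_1$ together with $t\,e_1\w e_4\w e_5$, $t\,e_2\w e_4\w e_5$, $t\,e_3\w e_4\w e_5$, $e_2\w e_3\w e_4$, $e_2\w e_3\w e_5$ span all of $E_3$, so $\dim_\kk(I(t)_2\w E_1)=10$ and the ideal generated in degree $2$ has Hilbert function $(1,5,7,0)$ — it is not a point of $\HH$, let alone of $\mathcal{X}^{\circ}$. (This is consistent with your own pivot claim: any rank $4$ perturbation whose support forces all five basis vectors gives $\dim\ge 8$, and here it overshoots $8$.) The paper instead perturbs \emph{within} $\W^2\langle e_1,e_2,e_3,e_4\rangle$, replacing $e_2\w e_3$ by $e_2\w(e_3+t e_4)$, which keeps the Hilbert function and lands in $\mathcal{Y}^{\circ}$, not $\mathcal{X}^{\circ}$, for $t\ne 0$.

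The configuration $I_2=e_1\w\langle e_2,e_3,e_4\rangle$ hides a second problem: here $I=(I_2,c_1,c_2)$ has \emph{two} cubic generators, and up to coordinate change the pair $\{c_1,c_2\}$ falls into two inequivalent orbits, represented in the paper by $K=(\ldots,e_2\w e_3\w e_5,\,e_2\w e_4\w e_5)$ and $L=(\ldots,e_2\w e_3\w e_4,\,e_2\w e_3\w e_5)$. Your family $\langle e_1\w e_2,e_1\w e_3,e_1\w e_4+t\,e_2\w e_3\rangle$ satisfies $q_t\w e_4=t\,e_2\w e_3\w e_4$, so $e_2\w e_3\w e_4$ lies in $I(t)_3$ for all $t\ne0$ and hence in the flat limit; more generally, for any family in $\mathcal{Y}^{\circ}$ whose degree $2$ part limits to $e_1\w\langle e_2,e_3,e_4\rangle$, the limit of $I(t)_2\w V(t)=\W^3V(t)$ forces $e_2\w e_3\w e_4$ into the special fiber. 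Since $e_2\w e_3\w e_4\notin K_3$, no amount of residual $\mathrm{GL}(E_1)$-symmetry or choice of $c(t)$ lets your construction reach $K$. That orbit genuinely requires a degeneration from $\mathcal{X}^{\circ}$ — the paper uses $K=\mathrm{in}_{\mathrm{lex}}(e_1\w e_2,\,e_1\w e_3,\,e_1\w e_4+e_2\w e_5)$, whose generic fiber has $\dim_\kk(I_2\w E_1)=8$ as your own dichotomy predicts. Both gaps are repairable, but as written the proof fails exactly at its crux.
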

\begin{proof}
Up to changing coordinates,
we may assume  that
$
I_2 =  \langle e_1 \w e_2, e_1 \w e_3,q \rangle,
$
 $ q = e_1 \w \ell_1 + q'$,
$\ell_1 \in \{0, e_4\} $ and $q' \in \W^2 V $ 
with $V = \langle  e_2, e_3, e_4, e_5\rangle$.
Observe that the following subspaces of $I_3$ are disjoint
$$
U = \langle e_1 \w e_2 , e_1 \w e_3 \rangle \w E_1, \qquad
U' = \langle e_2 \w q', e_3 \w q', e_4 \w q', e_5 \w q \rangle.
$$
Since $\dim_\kk U =5, \dim_\kk I_3 = 8,$ we deduce $\dim_\kk U' \leq 3$.
Modulo $e_1$ this yields $\dim_\kk(q' \w V) \leq 3$,
which in turn implies $\rank(q') \leq 2.$
Write
$q' = \ell_2 \w \ell_3$ where  $\ell_2, \ell_3 \in V$.
We distinguish three cases.

Case 1: Suppose first that $\ell_1 = 0$.
Then $q' \ne 0$, and  consider the subspace $W = \langle e_1, e_2, e_3, \ell_2, \ell_3\rangle \subseteq E_1$.
If $\dim_\kk W= 5$, then we may assume $\ell_2 = e_4, \ell_3= e_5$.
In this case the ideal $(I_2)$ has Hilbert function $(1,5,7,2)$, 
so $I=(I_2) \in \mathcal{X}^{\circ} \subseteq \mathcal{X}$.
 If $\dim_\kk W \leq  4$, then
we may assume $W \subseteq \langle e_1, e_2, e_3, e_4 \rangle$.
Thus $\ell_2 , \ell_3 \in \langle e_2, e_3, e_4 \rangle$,
and we may assume 
$\ell_2 \in \langle e_2, e_3\rangle$.
Changing coordinates in $\langle e_2, e_3\rangle$, we may further assume
 $\ell_2 = e_2$,
so that $q' = e_2 \w (\alpha e_3 + \beta e_4)$
for some $\alpha, \beta \in \kk$ with $(\alpha,\beta) \ne (0,0)$.
The ideal   $(I_2)$ has Hilbert function $(1,5,7,3)$ for every $\alpha, \beta$,
 so  $I = (I_2, c)$ for some $c \in E_3$.
 If $\beta \ne 0$, then, using Lemma \ref{LemmaBasicFacts} (3), we see  that $ I \in \mathcal{Y}^{\circ} \subseteq \mathcal{Y}$.
Taking a limit $\beta \rightarrow 0$, we deduce that $ I \in \mathcal{Y}$ also when $\beta = 0$.

 Case 2:
Suppose now that $\ell_1 = e_4$ and $q' \ne 0$.
If $e_5$ appears in $q'$, then it follows 
that $\dim_\kk U' \geq 3$, 
forcing $I_3 = U \oplus U'$, so   $I = (I_2)$ has Hilbert function $(1,5,7,2)$
and   $I \in \mathcal{X}^{\circ} \subseteq \mathcal{X}$.
If $e_5$ does not appear in $q'$, then 
 $q' \in \W^2 \langle e_2, e_3, e_4 \rangle$. 
 We get
$
I_2 \w E_1 \equiv q \w E_1 \equiv \langle e_2 \w e_3 \w e_4, q \w e_5 \rangle \pmod{U},
$
so $(I_2)$ has Hilbert function 
$(1,5,7,3)$, 
and $I = (I_2, c)$ for some $c \in E_3$.
 Using Lemma \ref{LemmaBasicFacts} (3), (4),  we verify that
  $I\in\mathcal{Y}^{\circ} \subseteq \mathcal{Y}$. 

Case 3:
Suppose, finally, that  $q'=0$,
so $\ell_1 =e_4$ and 
 $I_2 = \langle e_1 \w e_2, e_1 \w e_3, e_1 \w e_4\rangle$.
 Since $(I_2)$  has Hilbert function $(1,5,7,4,1)$, it follows that
$I = (I_2,c_1,c_2)$ with $c_1,c_2 \in E_3$.
We may assume $c_1 \in \langle e_2 \w e_3 \w e_5, e_2 \w e_4 \w e_5, e_3 \w e_4 \w e_5 \rangle$,
by possibly using $c_2$ to cancel $e_2 \w e_3 \w e_4$.
Hence, $c_1$ is the product of $e_5$ and a (reducible) quadric in $\W^2 \langle e_2, e_3, e_4 \rangle$,
and we may assume $c_1 = e_2 \w e_3 \w e_5$.
By the same argument, we can choose $c_2 =e_4\w \ell_4 \w \ell_5$ with $\ell_4, \ell_5 \in  \langle e_2, e_3, e_5 \rangle$.
If $\langle \ell_4, \ell_5 \rangle  = \langle e_2, e_3 \rangle$ then $\langle c_2\rangle = \langle e_2 \w e_3 \w e_4\rangle$.
Otherwise, we may assume $\ell_5 = e_5 + \ell_6$ with $\ell_4, \ell_6 \in \langle e_2, e_3 \rangle$.
Applying the change of coordinates $ e_5 \mapsto e_5 - \ell_6$, 
and then an appropriate change of coordinates in $\langle e_2, e_3 \rangle$,
we can fix $I_2$ and $c_1$, while reducing
 $\ell_4 \w \ell_5 $ to $e_2 \w e_5$, so that $\langle c_2\rangle = \langle e_2 \w e_4 \w e_5\rangle$.
To summarize, 
when $q'=0$ we may change coordinates to transform $I$ to one of the two ideals
\begin{eqnarray*}
K &=&( e_1 \w e_2, e_1 \w e_3, e_1 \w e_4,  e_2 \w e_3 \w e_5, e_2 \w e_4 \w e_5),\\
L &=& ( e_1 \w e_2, e_1 \w e_3, e_1 \w e_4,  e_2 \w e_3 \w e_4, e_2 \w e_3 \w e_5).
\end{eqnarray*}
In order to conclude, it suffices to show that  $K\in \mathcal{X}$ and $L\in \mathcal{Y}$.
They are initial ideals
$K = \mathrm{in}_{\mathrm{lex}}(K')$ and $L = \mathrm{in}_{\mathrm{lex}}(L')$ of the ideals
\begin{eqnarray*}
K' &=& ( e_1 \w e_2, e_1 \w e_3, e_1 \w e_4 +  e_2  \w e_5),\\
L' &=&  ( e_1 \w e_2, e_1 \w e_3, e_1 \w e_4 +  e_2 \w e_3, e_2 \w e_3 \w e_5),
\end{eqnarray*}
Since $K' \in \mathcal{X}^{\circ}$ and $L' \in \mathcal{Y}^{\circ}$,
we obtain $K\in \mathcal{X}$ and $L\in \mathcal{Y}$ as desired.
\end{proof}

\begin{lemma} \label{LemmaQuadricsInSubspace}
Let $I\subseteq E$ be an ideal with Hilbert function $(1,5,7,2)$ 
such that  $I_2 \subseteq \W^2 V$ for some  $V \in \Gr(4,E_1)$.
Then $I$ lies in $\mathcal{X}\cup\mathcal{Y}$.
\end{lemma}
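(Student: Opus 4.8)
The plan is to reduce Lemma \ref{LemmaQuadricsInSubspace} to the two cases already handled, namely Lemma \ref{LemmaNoRank2Pencil} (no pencil of rank $2$ quadrics) and Lemma \ref{LemmaRank2Belongs} (a pencil of rank $2$ quadrics is present). The key observation is that the hypothesis $I_2 \subseteq \W^2 V$ with $V \in \Gr(4,E_1)$ gives us a concrete $4$-dimensional ambient space inside which all three quadrics of $I_2$ live, so we may fix coordinates $V = \langle e_1, e_2, e_3, e_4\rangle$ and study $I_2 \in \Gr(3, \W^2 V) = \Gr(3,6)$. First I would split on whether $I$ contains a pencil of rank $2$ quadrics: if it does, we are done immediately by Lemma \ref{LemmaRank2Belongs}, so the real content is the complementary case where $I$ contains \emph{no} such pencil.

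\textbf{The no-pencil case.}
Suppose $I$ contains no pencil of rank $2$ quadrics. The subtlety is that Lemma \ref{LemmaNoRank2Pencil} concludes $I_2 \subseteq \W^2 V'$ for \emph{some} $V'\in \Gr(4,E_1)$, which is exactly the hypothesis we already have, so that lemma alone does not place $I$ in $\mathcal{X}\cup\mathcal{Y}$ — it only normalizes $I_2$. The plan here is to use the dichotomy furnished by Lemma \ref{LemmaOpenCondition}: with $U = I_2 \in \Gr(3, E_2)$, either $U^2 = 0$, forcing $U \subseteq \W^2 W$ for some $W\in\Gr(3,E_1)$ (after checking the dimension hypothesis $\dim_\kk(U\w E_1)\geq 7$, which should follow from the Hilbert function since $\dim_\kk I_3 = 8$ and $U\w E_1 \subseteq I_3$), or $U^2 \ne 0$ and $U$ contains a rank $4$ quadric. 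In the first subcase $I_2 \subseteq \W^2 W$ with $\dim W = 3$, and in the second, $I_2$ genuinely requires four variables but contains no rank $2$ pencil; this is precisely the defining condition of $\mathcal{Y}^{\circ}$, so $I \in \mathcal{Y}^\circ \subseteq \mathcal{Y}$ once we confirm the component $I = (I_2, c)$ in degree $3$.

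\textbf{Disposing of the three-variable subcase.}
When $I_2 \subseteq \W^2 W$ for $W \in \Gr(3,E_1)$, we have $\dim_\kk \W^2 W = 3$, so $I_2 = \W^2 W$ and, after coordinates, $I_2 = \langle e_1\w e_2, e_1\w e_3, e_2\w e_3\rangle$. Here every quadric in $I_2$ has rank $2$ and any two share no common factor, so this configuration must be analyzed directly rather than via the two prior lemmas. I would compute $\dim_\kk (I_2\w E_1)$ and the Hilbert function of $(I_2)$ explicitly to pin down the degree-$3$ generators $c$, then identify the resulting $I$ — likely by exhibiting it as $\mathrm{in}_{\mathrm{lex}}$ of an ideal in $\mathcal{X}^\circ$ or $\mathcal{Y}^\circ$, in the same spirit as Case $3$ of Lemma \ref{LemmaRank2Belongs}.

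\textbf{Anticipated obstacle.}
The main obstacle is the bookkeeping in the $U^2\ne 0$ subcase: one must verify that the defining inequalities of $\mathcal{Y}^{\circ}$ hold, i.e. that $I_2 \not\subseteq \W^2 W$ for \emph{all} $W\in\Gr(3,E_1)$, which is exactly the content of Lemma \ref{LemmaOpenCondition}, and then confirm that the degree-$3$ part has the right dimension so that $I=(I_2,c)$ for a single cubic $c$, placing $I$ in $\mathcal{Y}^\circ$. The borderline $W = 3$ configuration is the part most likely to require an explicit Gröbner degeneration argument rather than an appeal to a previous lemma.
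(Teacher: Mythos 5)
Your overall strategy matches the paper's: split on whether $I_2$ contains a pencil of rank $2$ quadrics, dispose of the pencil case by Lemma \ref{LemmaRank2Belongs}, and place the remaining ideals in $\mathcal{Y}^{\circ}$. (The paper splits on the presence of a rank $4$ quadric, which is the complementary dichotomy: if every quadric in the $3$-dimensional space $I_2$ has rank at most $2$, then any two of them span a pencil of rank $2$ quadrics by Lemma \ref{LemmaBasicFacts}(1),(2).) Two steps of your plan, however, are flawed as written. Your ``three-variable subcase'' rests on a false claim: in $I_2 = \W^2 W = \langle e_1\w e_2, e_1\w e_3, e_2\w e_3\rangle$ it is not true that any two quadrics share no common factor --- two $2$-planes inside the $3$-space $W$ always meet in a line, so for instance $\langle e_1\w e_2, e_1\w e_3\rangle$ is a pencil of rank $2$ quadrics. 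Hence that subcase is vacuous under your no-pencil hypothesis (it is already absorbed by your step 1), and the Gr\"obner degeneration analysis you anticipate there is unnecessary.

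Second, your appeal to Lemma \ref{LemmaOpenCondition} is unjustified: its hypothesis $\dim_\kk(U\w E_1)\geq 7$ does not follow from $\dim_\kk I_3 = 8$, since $U\w E_1\subseteq I_3$ only gives an upper bound. Indeed $U = e_1\w\langle e_2,e_3,e_4\rangle$ has $\dim_\kk(U\w E_1)=6$ and does occur for ideals with Hilbert function $(1,5,7,2)$ (Case 3 of Lemma \ref{LemmaRank2Belongs}). The deductions must run in the opposite order, which is what the paper does: the absence of a rank $2$ pencil forces a rank $4$ quadric $q\in I_2$ (if $q^2=(q')^2=0$ but $q\w q'\ne 0$, then $(q+q')^2\ne 0$ since $\mathrm{char}(\kk)\ne 2$); then $q\w V=\W^3 V$ gives $I_2\w E_1=\W^3 V\oplus(I_2\w e_5)$ of dimension exactly $7$, which is precisely the computation showing $I=(I_2,c)$ that you defer to the end. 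The remaining condition $I_2\not\subseteq\W^2 W$ for $3$-dimensional $W$ then follows from Lemma \ref{LemmaBasicFacts}(4) rather than from Lemma \ref{LemmaOpenCondition}. With these repairs your argument collapses to the paper's shorter one.
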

\begin{proof} 
If $I_2$ contains no rank $4$ quadric, then it  contains a pencil of rank 2 quadrics and the conclusion follows from Lemma \ref{LemmaRank2Belongs}.
Without loss of generality,
 we may assume $V = \langle e_1, e_2, e_3, e_4 \rangle$ and $I_2 = \langle e_1 \w e_2 + e_3 \w e_4, q_1, q_2 \rangle$ with 
 $q_1, q_2 \in \W^2 V$.
 Observe that 
 $$
\textstyle{ I_2 \w E_1 = \Big(\W^3 V\Big) \oplus \Big(I_2 \w e_5\Big)}
 $$
has dimension 7.
Since $\dim_\kk I_3 = 8$, it follows that $I = (I_2,c)$ for some $c \in E_3$, 
and 
 we conclude
 $I \in \mathcal{Y}^{\circ} \subseteq \mathcal{Y}$.
\end{proof}

Now we turn to parametrizing the loci $\mathcal{X}$ and $\mathcal{Y}$.

\begin{prop}\label{PropositionParametrizationX}
The locus $\mathcal{X}\subseteq \HH$ is irreducible and 14-dimensional.
\end{prop}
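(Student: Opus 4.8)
The plan is to exhibit $\mathcal{X}^{\circ}$ as the image of an irreducible incidence variety fibered over the space of pencils of rank $2$ quadrics, and to compute its dimension fiberwise. By Lemma \ref{LemmaBasicFacts}(1), every pencil of rank $2$ quadrics has the form $\ell_1 \w W$ for a flag $\langle \ell_1\rangle \subseteq W$ with $W\in \Gr(3,E_1)$; conversely, $W$ and $\langle \ell_1\rangle$ are recovered from the pencil as the span of its linear factors and as their common factor. Thus the base
$$\mathcal{B} = \{(W, \langle \ell_1 \rangle) : W \in \Gr(3,E_1),\ \langle \ell_1\rangle \in \P(W)\}$$
is a $\P^2$-bundle over $\Gr(3,5)$, hence irreducible of dimension $6+2 = 8$. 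Over $\mathcal{B}$ I would form the incidence variety $\widetilde{\mathcal{X}}$ whose fiber over a pencil $P$ consists of those $\langle q\rangle \in \P(E_2/P)\cong \P^7$ for which $I = (P, q)$ has Hilbert function $(1,5,7,2)$; the projection $\widetilde{\mathcal{X}}\to \HH$, $(P,\langle q\rangle)\mapsto (P,q)$, then has image $\mathcal{X}^{\circ}$.

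The crux is the fiber computation, and here I would exploit the transitive action of $\mathrm{GL}(E_1)$ on flags to reduce to the single pencil $P = \langle e_1\w e_2, e_1 \w e_3\rangle$, so that every fiber is a $\mathrm{GL}(E_1)$-translate of this one. Since $I = (P,q)$ is generated in degree $2$, one has $I_3 = I_2 \w E_1 = (P\w E_1) + (q \w E_1)$, and a direct count gives $\dim_\kk (P \w E_1) = 5$. Hence the fiber condition $\dim_\kk I_3 = 8$ is equivalent to the multiplication map $\mu_q\colon E_1 \to E_3/(P\w E_1)$, $\ell \mapsto [q\w \ell]$, having rank exactly $3$. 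Writing $q$ in coordinates adapted to the decomposition $E_2 = \W^2 W \oplus (W\w U)\oplus \W^2 U$ with $U = \langle e_4, e_5\rangle$, I would write out the $5\times 5$ matrix of $\mu_q$ explicitly; I expect that in the chart where the $\W^2 U$-coefficient of $q$ is nonzero the locus $\rank \mu_q = 3$ is cut out by a single quadratic equation, and that globally it is an irreducible quadric hypersurface $Q\subseteq \P^7$ of dimension $6$ (its defining quadratic form having rank at least three, hence irreducible), the locus in the complementary chart $\W^2 U = 0$ having strictly smaller dimension. This linear algebra—pinning down the precise Hilbert function condition on $q$ and checking that it defines an irreducible $6$-fold rather than something of the wrong dimension or reducible—is the main obstacle.

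Granting the fiber computation, $\widetilde{\mathcal{X}}$ is irreducible of dimension $8 + 6 = 14$, since it fibers over the irreducible base $\mathcal{B}$ with irreducible equidimensional fibers, all isomorphic to $Q$ by equivariance. It then remains to see that $\widetilde{\mathcal{X}}\to \mathcal{X}^{\circ}$ is birational. For generic $q$ of rank $4$ I would check, using Lemma \ref{LemmaBasicFacts}(2), that the rank $2$ quadrics in $I_2$ form exactly the line $\P(P)$, so that the pencil $P$—and hence $(W,\langle\ell_1\rangle)$ together with $\langle q\rangle = I_2/P$—is recovered from $I$; thus the map is generically injective. Therefore $\mathcal{X}^{\circ}$ is irreducible of dimension $14$, and so is its closure $\mathcal{X}$.
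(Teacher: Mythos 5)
Your overall strategy is the same as the paper's: fiber $\mathcal{X}^{\circ}$ over the $8$-dimensional space of rank-$2$ pencils (your flag description of $\mathcal{B}$ is equivalent to the paper's choice of $\langle \ell_1\rangle \in \P^4$ and $\langle \ell_2,\ell_3\rangle \in \Gr(2,E_1/\langle\ell_1\rangle)$), show each fiber is a $6$-dimensional open piece of an irreducible quadric in $\P^7\cong\P(E_2/P)$, and conclude by generic injectivity that $\dim\mathcal{X}=8+6=14$. But the step you yourself flag as ``the main obstacle'' is genuinely the heart of the proof, and as written it is a gap in two respects. First, the rank computation for $\mu_q$ is only announced, not performed; your expectation is in fact correct (in the chart where the $\W^2 U$-coefficient $\lambda_{4,5}$ of $q$ is nonzero, three of the columns of the $5\times 5$ matrix already give rank $3$, and the remaining two columns reduce, modulo those three, to multiples of the single quadric $\lambda_{2,3}\lambda_{4,5}-\lambda_{2,4}\lambda_{3,5}+\lambda_{2,5}\lambda_{3,4}$), but this needs to be carried out. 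Second, and more subtly, establishing ``quadric hypersurface on one chart, strictly smaller dimension on the complementary locus $\lambda_{4,5}=0$'' gives the dimension bound but does \emph{not} give irreducibility of the fiber: a priori the part of the fiber with $\lambda_{4,5}=0$ could be a separate component of $\mathcal{X}^{\circ}$ not lying in the closure of the chart piece, in which case $\mathcal{X}=\overline{\mathcal{X}^{\circ}}$ could acquire extra components. You need the containment of the \emph{entire} fiber in a single irreducible quadric.

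The paper closes both points at once by working with the intrinsic condition $\ell_1\w q^2=0$: it first proves that this holds for \emph{every} $I=(\ell_1\w\ell_2,\ell_1\w\ell_3,q)\in\mathcal{X}^{\circ}$ (via Lemma \ref{LemmaBasicFacts}(2),(5) and the opening argument of the proof of Lemma \ref{LemmaRank2Belongs}), observes that this condition is exactly the irreducible Pl\"ucker quadric $\mathcal{Q}\subseteq\P^7$ above, and then shows that on $\mathcal{Q}$ one always has $\dim_\kk(I_2\w E_1)\leq 8$, so that the fiber is a nonempty \emph{open} subset of $\mathcal{Q}$, hence irreducible of dimension $6$. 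If you replace your chart-by-chart rank analysis with this coordinate-free identification (or, alternatively, verify directly that $\rank\mu_q=3$ forces the Pl\"ucker relation also when $\lambda_{4,5}=0$), your argument goes through; the remaining ingredients you sketch (transitivity of $\mathrm{GL}(E_1)$ on pencils, generic injectivity via uniqueness of the rank-$2$ pencil for generic $q$) match the paper and are fine, the paper merely proving injectivity on all of $\mathcal{X}^{\circ}$ rather than generically.
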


\begin{proof}
Consider the general member $I \in \mathcal{X}$,
that is, 
an ideal $I \in \mathcal{X}^{\circ}$, which satisfies
$$
I = ( \ell_1 \w \ell_2, \ell_1 \w \ell_3, q) 
$$
with Hilbert function $(1,5,7,2)$,
such that $\ell_1, \ell_2, \ell_3 \in E_1$,
$q \in E_2$.
We 
observe  that
 $\ell_1 \w q^2 =0$. 
 In fact, if  $\ell_1 \w q^2 \ne 0$,
 then by Lemma \ref{LemmaBasicFacts} (2), (5), $\rank(q)=4$ and $q\in \W^2 V$ for some $V\in \Gr(4,E_1)$ with $\ell_1 \notin V$.
However, the discussion in the first paragraph of the proof of Lemma \ref{LemmaRank2Belongs} now  generates a contradiction.

We parametrize $I$ by choosing subspaces
\begin{eqnarray}
\label{Eq1}\langle \ell_1 \rangle &\in &  \Gr(1,E_1) \cong \P^4,\\
\label{Eq2}\langle \ell_2, \ell_3 \rangle &\in &  \Gr\left(2, \frac{E_1}{\langle \ell_1\rangle}\right) \cong \Gr(2,4),\\
\label{Eq3}\langle q \rangle & \in &  \mathcal{Z}\subseteq \Gr\Big(1, \frac{E_2}{\langle \ell_1\w \ell_2, \ell_1 \w \ell_3\rangle}\Big) \cong \P^7,
\end{eqnarray}
where $\mathcal{Z}\subseteq \P^7$ is the locus of points $\langle q \rangle $ such that 
 $ \ell_1 \w q^2 =0$ and 
 \begin{equation}\label{EquationSumDimension}
 \dim_\kk \big( \langle \ell_1 \w \ell_2, \ell_1 \w \ell_3, q \rangle \w E_1 \big) = 8. 
\end{equation} 
\noindent
 Extending $\{\ell_1,\ell_2, \ell_3\}$ to a basis $\{\ell_1,\ell_2, \ell_3, \ell_4, \ell_5\}$ of $E_1$,
a system of projective coordinates in this $\P^7$ is given by the coefficients $\lambda_{i,j}$ of the non-zero basis vectors $\ell_i\w \ell_j$.
It is easy to see that the condition $\ell_1\w q^2 =0$ defines an irreducible quadric hypersurface  $\mathcal{Q}\subseteq\P^7$ with (Pl\"ucker) equation
$$
\lambda_{2,3}\lambda_{4,5}-\lambda_{2,4}\lambda_{3,5}+\lambda_{2,5}\lambda_{3,4} =0,
$$
and $\mathcal{Z}\subseteq \mathcal{Q}$ is the  subset where
\eqref{EquationSumDimension} holds. 
We claim that condition \eqref{EquationSumDimension} is open in $\mathcal{Q}$, 
equivalently, that $8$ is the largest possible dimension for the vector space in
\eqref{EquationSumDimension} when $\langle q \rangle \in \mathcal{Q}$.
If $q^2 = 0$, then $q$ is reducible, and, 
up to changing coordinates,
the space $\langle \ell_1 \w \ell_2, \ell_1 \w \ell_3, q \rangle$ is generated by monomials;
 it is easy then to conclude that 
$
  \dim_\kk \big( \langle \ell_1 \w \ell_2, \ell_1 \w \ell_3, q \rangle \w E_1 \big) \leq  8. 
$
If $q^2 \ne 0$, then, 
since $\ell_1 \w q^2 = 0$,
we have $q = \ell_1 \w \ell_6 + \ell_7 \w \ell_8$ for some $\ell_6,\ell_7,\ell_8 \in E_1$ by Lemma \ref{LemmaBasicFacts} (5),
and 
$$
\dim_\kk \frac{(\langle \ell_1 \w \ell_2, \ell_1 \w \ell_3, q \rangle \w E_1) + (\ell_1 \w  E_2)}{(\ell_1 \w  E_2)}  = \dim_\kk \frac{(\ell_7 \w \ell_8 \w E_1) + (\ell_1 \w  E_2)}{(\ell_1 \w  E_2)}  =  2
$$
which implies the desired statement, since $\dim_\kk (\ell_1 \w  E_2) = 6$.
We conclude that $\mathcal{Z}\subseteq \mathcal{Q}$ is an  open subset.
It is also  non-empty, since e.g.\
$\langle \ell_1 \w \ell_4 + \ell_2 \w \ell_5 \rangle \in \mathcal{Z}$,
therefore  $\mathcal{Z}$  is  irreducible of dimension 6.

By definition, all ideals $I \in \mathcal{X}^{\circ}$ arise in this way.
Conversely, any such choices
\eqref{Eq1}, \eqref{Eq2}, \eqref{Eq3}
determine an ideal in  $ \mathcal{X}^{\circ}$, 
since, by construction, the resulting ideal $ ( \ell_1 \w \ell_2 , \ell_1 \w \ell_3, q)$
has the correct Hilbert function $(1,5,7,2)$.
Moreover,  we claim that  each  ideal $I\in \mathcal{X}^{\circ}$ is obtained for a unique choice of subspaces
\eqref{Eq1}, \eqref{Eq2}, \eqref{Eq3}.
First, since $I=(I_2)$, 
we observe that  $I$ contains a unique pencil of rank 2 quadrics:
 otherwise, up to changes of coordinates, we would have 
 $I = ( \ell_1 \w \ell_2, \ell_1\w \ell_3, \ell_1 \w \ell_4)$
 or 
 $I = ( \ell_1 \w \ell_2, \ell_1\w \ell_3, \ell_2 \w \ell_4)$,
 and neither ideal has Hilbert function $(1,5,7,2)$.
 Given the uniqueness of the rank 2 pencil, the subspaces \eqref{Eq1} and  \eqref{Eq2} are uniquely determined by $I$.
In turn, it is obvious that the subspace \eqref{Eq3} is  uniquely determined by $I$.

We have thus constructed  an irreducible parametrization of $\mathcal{X}^{\circ}$ of dimension $ 4 + 2(4-2)+6 = 14$,
so its closure  $\mathcal{X}$ is also irreducible and $14$-dimensional.
\end{proof}

\begin{prop}\label{PropositionParametrizationY}
The locus $\mathcal{Y}\subseteq \HH$ is irreducible and 15-dimensional.
\end{prop}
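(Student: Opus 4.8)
The plan is to mirror the proof of Proposition \ref{PropositionParametrizationX}: I would parametrize $\mathcal{Y}^{\circ}$ via the data of the subspace $V \in \Gr(4,E_1)$, the quadric part $I_2 \subseteq \W^2 V$, and the cubic $c$, then count dimensions and verify irreducibility. By definition of $\mathcal{Y}^{\circ}$, every $I$ satisfies $I_2 \subseteq \W^2 V$ for some $V \in \Gr(4,E_1)$ but $I_2 \not\subseteq \W^2 W$ for all $W \in \Gr(3,E_1)$. The computation in the proof of Lemma \ref{LemmaQuadricsInSubspace} shows that, once $I_2 \subseteq \W^2 V$ is fixed, we have $\dim_\kk(I_2 \w E_1) = 7$, forcing $I = (I_2, c)$ for a single cubic $c \in E_3$ whose class in $E_3/(I_2 \w E_1)$ has dimension $\dim_\kk E_3 - 7 = 10 - 7 = 3$.

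\begin{proof}
We parametrize the general member $I \in \mathcal{Y}^{\circ}$. By definition $I_2 \subseteq \W^2 V$ for some $V \in \Gr(4,E_1)$, so we first choose
$$
V \in \Gr(4, E_1) \cong \P^4.
$$
Next we choose the quadratic part $I_2 \in \Gr(3, \W^2 V) \cong \Gr(3,6)$, which has dimension $3(6-3)=9$. The open condition that $I_2 \not\subseteq \W^2 W$ for all $W \in \Gr(3,E_1)$ cuts out an open subset of this Grassmannian by Lemma \ref{LemmaOpenCondition} (applied with the hypothesis $\dim_\kk(I_2 \w E_1) \geq 7$, which holds here), so it does not affect the dimension; one checks this open locus is nonempty, e.g.\ $I_2$ containing a rank $4$ quadric together with two generic quadrics of $\W^2 V$. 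By the computation of Lemma \ref{LemmaQuadricsInSubspace}, $\dim_\kk(I_2 \w E_1) = 7$, hence $I = (I_2, c)$ and the remaining choice is
$$
\langle c \rangle \in \Gr\!\left(1, \frac{E_3}{I_2 \w E_1}\right) \cong \Gr(1,3) \cong \P^2,
$$
which has dimension $2$. Thus the total parameter count is $4 + 9 + 2 = 15$.

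It remains to argue that these data recover $I$ uniquely and that the parametrization is irreducible. For uniqueness, $I_2$ and $c$ are intrinsic to $I$ (the latter as the image $I_3/(I_2\w E_1)$), while the subspace $V$ is recovered from $I_2$: since $I_2$ admits no rank $2$ pencil lying in a common $\W^2 W$ with $W\in \Gr(3,E_1)$, one shows $V$ is the unique $4$-dimensional subspace with $I_2 \subseteq \W^2 V$, for instance as the span of the linear forms appearing in a rank $4$ quadric of $I_2$ via Lemma \ref{LemmaBasicFacts} (4). The parameter space is an open subset of a tower of Grassmannian bundles---$\Gr(3, \W^2 V)$ varying over $\Gr(4,E_1)$, and then a $\P^2$-bundle $\Gr(1, E_3/(I_2\w E_1))$ over that locus---and is therefore irreducible of dimension $15$. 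Hence its closure $\mathcal{Y}$ is irreducible and $15$-dimensional.
\end{proof}

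The main obstacle I anticipate is the uniqueness of the subspace $V$, and relatedly the verification that the locus where $I_2 \not\subseteq \W^2 W$ is genuinely open and nonempty: one must ensure the rank conditions of Lemma \ref{LemmaBasicFacts} pin down $V$ unambiguously and that the $\P^2$-bundle structure is well-defined over the varying base, i.e.\ that $\dim_\kk(I_2\w E_1)$ is constant equal to $7$ throughout $\mathcal{Y}^{\circ}$.
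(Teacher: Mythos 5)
Your proof is correct and follows essentially the same route as the paper: the same three-step parametrization $V\in\Gr(4,E_1)$, $I_2\in\Gr(3,\W^2 V)$, $\langle c\rangle\in\P^2$, the same use of Lemma \ref{LemmaOpenCondition} for openness, and the same dimension count $4+9+2=15$. The one point where the paper is more careful is that it builds the additional open condition $\dim_\kk(I_2\w E_1)=7$ directly into the definition of the parameter locus $\mathcal{U}\subseteq\Gr(3,\W^2 V)$ --- this is needed both to legitimately invoke Lemma \ref{LemmaOpenCondition} and to make the $\P^2$-fibration in $c$ well defined, since e.g.\ $I_2=e_1\w\langle e_2,e_3,e_4\rangle$ satisfies your non-containment condition yet has $\dim_\kk(I_2\w E_1)=6$; you flag this issue at the end but do not fully close it.
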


\begin{proof}
Consider the general member $I \in \mathcal{Y}$,
that is, 
an ideal $I \in \mathcal{Y}^{\circ}$, which satisfies
$$
I = ( q_1,q_2,  q_3, c) 
$$
with Hilbert function $(1,5,7,2)$,
such that  $q_1, q_2, q_3 \in E_2$, $c\in E_3$,
$ I_2 \subseteq  \W^2 V$ for some $V\in \Gr(4,E_1)$,
and $I_2 \not\subseteq  \W^2 W$ for any $W\in \Gr(3,E_1)$.
We observe that $c$ is a minimal generator,
since, writing $E_1 = V \oplus \langle\ell_5\rangle$, we have
\begin{equation}\label{Equation7Cubics}
{\textstyle\dim_\kk(I_2 \w E_1) \leq \dim_\kk\Big(\W^3 V\Big) + \dim_\kk (I_2 \w \ell_5) = 4+ 3 = 7.}
\end{equation}

We parametrize $I$ by choosing subspaces
\begin{eqnarray}
\label{Eq4}V&\in & \Gr(4,E_1) \cong \P^4,\\
\label{Eq5}
 \langle q_1, q_2, q_3 \rangle &\in & \mathcal{U}\subseteq  {\textstyle\Gr\Big(3, \W^2 V\Big) }\cong \Gr(3,6),\\
\label{Eq6}\langle c \rangle & \in & \Gr\Big(1, \frac{E_3}{\langle q_1, q_2, q_3\rangle \w E_1}\Big) \cong \Gr(1,3) \cong \P^2,
\end{eqnarray}
where $\mathcal{U}$ is the locus of points $\langle q_1, q_2, q_3 \rangle \in \Gr(3,6)$ such that 
$\langle q_1,q_2,q_3 \rangle \not\subseteq  \W^2 W$ for every $W\in \Gr(3,E_1)$, 
and such that $\dim_\kk ( \langle q_1, q_2, q_3 \rangle \w E_1) = 7$.
We claim that $\mathcal{U}\subseteq \Gr(3,6)$  is an open subset.
We have already observed in \eqref{Equation7Cubics} that $7$ is the largest dimension for 
$\langle q_1, q_2, q_3 \rangle \w E_1$ when $\langle q_1, q_2, q_3 \rangle \in \Gr(3,6)$,
so the equation $\dim_\kk ( \langle q_1, q_2, q_3 \rangle \w E_1) = 7$ is an open condition.
By Lemma \ref{LemmaOpenCondition}, the condition  
$\langle q_1,q_2,q_3 \rangle \not\subseteq  \W^2 W$ for every $W\in \Gr(3,E_1)$ is also open.
Finally, $\mathcal{U}\ne \emptyset$ since e.g.\ $\langle \ell_1 \w \ell_2, \ell_1 \w \ell_3, \ell_2 \w \ell_4 \rangle \in \mathcal{U}$ 
where $V = \langle \ell_1, \ell_2, \ell_3, \ell_4 \rangle$.
Therefore, $\mathcal{U}$ is irreducible of dimension $3(6-3) = 9$.

By definition, all ideals $I \in \mathcal{Y}^{\circ}$ arise in this way.
Conversely, any such choices
\eqref{Eq4}, \eqref{Eq5}, \eqref{Eq6}
determine an ideal in  $ \mathcal{Y}^{\circ}$, 
since the resulting ideal $( q_1,q_2,  q_3, c) $
has the correct Hilbert function $(1,5,7,2)$ and the  conditions of $ \mathcal{Y}^{\circ}$ are satisfied.
Moreover,    each  ideal $I\in \mathcal{Y}^{\circ}$ is obtained for a unique choice of subspaces:
it is obvious that the subspaces \eqref{Eq5}, \eqref{Eq6} are uniquely determined by $I$, 
whereas 
for \eqref{Eq4} this follows from the requirement that $I_2 \not\subseteq  \W^2 W$ for any $W\in \Gr(3,E_1)$.

We have thus constructed   an irreducible parametrization of $\mathcal{Y}^{\circ}$ of dimension $ 4 + 9+2= 15$,
so its closure  $\mathcal{Y}$ is also irreducible and $15$-dimensional.
\end{proof}

We are  ready to state   the main theorem of this section.

\begin{manualtheoreminner} \label{TheoremExterior}
The standard graded Hilbert scheme 
$
\HH = \HH^{(1,5,7,2)}(E)
$ 
 is a union of two  irreducible components  of dimensions 14 and 15.
The lexicographic point of $\HH$ lies in the intersection of the two components, and is a singular point.
\end{manualtheoreminner}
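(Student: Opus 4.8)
The plan is to assemble the earlier structural results into a decomposition $\HH = \mathcal{X}\cup\mathcal{Y}$ and then to exhibit the lexicographic point on both pieces. First I would establish the set-theoretic union. Given an arbitrary $I\in\HH$, I split into two cases according to whether $I$ contains a pencil of rank $2$ quadrics. If it does, Lemma \ref{LemmaRank2Belongs} places $I$ in $\mathcal{X}\cup\mathcal{Y}$ directly. If it does not, Lemma \ref{LemmaNoRank2Pencil} forces $I_2\subseteq\W^2 V$ for some $V\in\Gr(4,E_1)$, and then Lemma \ref{LemmaQuadricsInSubspace} again yields $I\in\mathcal{X}\cup\mathcal{Y}$. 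Together with Propositions \ref{PropositionParametrizationX} and \ref{PropositionParametrizationY}, which give that $\mathcal{X}$ and $\mathcal{Y}$ are irreducible of dimensions $14$ and $15$, this presents $\HH$ as the union of two closed irreducible subschemes.

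Next I would check that $\mathcal{X}$ and $\mathcal{Y}$ are genuinely distinct irreducible components, i.e.\ that neither contains the other. Since $\dim\mathcal{X}=14<15=\dim\mathcal{Y}$, we cannot have $\mathcal{Y}\subseteq\mathcal{X}$. For the reverse inclusion I would use the invariant ``$I_2\subseteq\W^2 V$ for some $V\in\Gr(4,E_1)$''. This is a closed condition on $\HH$, being the image under the (proper) projection of the closed incidence locus $\{(I,V):I_2\subseteq\W^2 V\}\subseteq\HH\times\Gr(4,E_1)$, and it holds on all of $\mathcal{Y}$ since it holds on $\mathcal{Y}^{\circ}$ by definition and passes to the closure. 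On the other hand it fails for the explicit point $(e_1\w e_2,\,e_1\w e_3,\,e_1\w e_4+e_2\w e_5)\in\mathcal{X}^{\circ}$, whose quadratic part lies in no $\W^2 V$ because the rank $4$ quadric $e_1\w e_4+e_2\w e_5$ forces $e_1,e_2,e_4,e_5\in V$ while $e_1\w e_3$ forces $e_3\in V$. Hence $\mathcal{X}\not\subseteq\mathcal{Y}$, so $\mathcal{X}$ and $\mathcal{Y}$ are exactly the two irreducible components of $\HH$, of dimensions $14$ and $15$.

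It remains to locate the lexicographic ideal. A direct computation in the lexicographic order gives
$$
L = (e_1\w e_2,\ e_1\w e_3,\ e_1\w e_4,\ e_2\w e_3\w e_4,\ e_2\w e_3\w e_5).
$$
Since $L_2$ contains the pencil of rank $2$ quadrics $\langle e_1\w e_2,\,e_1\w e_3\rangle$, Lemma \ref{LemmaRank2Belongs} applies; in fact $L$ is precisely the ideal called $L$ in Case $3$ of that lemma, whence $L\in\mathcal{Y}$. The crux is to prove instead that $L\in\mathcal{X}$. For this I would produce an ideal of $\mathcal{X}^{\circ}$ whose lexicographic initial ideal is $L$, namely
$$
I' = \big(e_1\w e_2,\ e_1\w e_3,\ q\big), \qquad q = e_1\w e_4 + e_2\w e_3 + e_2\w e_5 = e_1\w e_4 + e_2\w(e_3+e_5).
$$
The tail $e_2\w(e_3+e_5)$ is decomposable, so $q^2$ is a multiple of $e_1\w e_2\w e_4\w(e_3+e_5)$ and $e_1\w q^2=0$; one then checks $\dim_\kk(I'_2\w E_1)=8$, so $I'$ satisfies the dimension condition \eqref{EquationSumDimension} and lies in $\mathcal{X}^{\circ}$. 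Computing the degree $3$ part, the leading terms of $I'_2\w E_1$ recover exactly $L_3$, so $\mathrm{in}_{\mathrm{lex}}(I')=L$. Because $\mathcal{X}$ is closed and $\mathrm{GL}(E_1)$-invariant, the lexicographic degeneration of $I'$ is a limit of ideals in the orbit of $I'$, and therefore $L=\mathrm{in}_{\mathrm{lex}}(I')\in\mathcal{X}$. As $L$ then lies in the intersection of the two distinct components, it cannot be a smooth point of $\HH$.

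I expect the main obstacle to be precisely the membership $L\in\mathcal{X}$. The naive lift $(e_1\w e_2,\,e_1\w e_3,\,e_1\w e_4+e_2\w e_3)$ — the quadric-generated analogue of the degeneration used in Lemma \ref{LemmaRank2Belongs} to reach $\mathcal{Y}$ — fails: its degree $3$ component has dimension only $7$, so it is not even in $\HH$, and its initial ideal misses $e_2\w e_3\w e_5$, which stays bundled inside $e_1\w e_4\w e_5 + e_2\w e_3\w e_5$. The delicate point is to choose a decomposable tail for which the three quadrics alone already cut out the Hilbert function $(1,5,7,2)$ \emph{and} produce the prescribed initial ideal $L$; the choice $e_2\w(e_3+e_5)$ is what lets the bundled cubic split correctly against the contributions of $e_1\w e_2$ and $e_1\w e_3$. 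The remaining assertions — the union, the irreducibility and dimensions, and $L\in\mathcal{Y}$ — are comparatively routine given the preceding lemmas.
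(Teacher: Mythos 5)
Your proposal is correct and follows essentially the same route as the paper: the same three lemmas yield $\HH=\mathcal{X}\cup\mathcal{Y}$, Propositions \ref{PropositionParametrizationX} and \ref{PropositionParametrizationY} give the dimensions, and $L\in\mathcal{Y}$ comes from Case 3 of Lemma \ref{LemmaRank2Belongs}. Your two local deviations both check out: you rule out $\mathcal{X}\subseteq\mathcal{Y}$ via the closed condition ``$I_2\subseteq\W^2V$ for some $V\in\Gr(4,E_1)$'' where the paper instead uses semicontinuity of the number of minimal generators, and you reach $L\in\mathcal{X}$ by a single Gr\"obner degeneration from $I'=(e_1\w e_2,\,e_1\w e_3,\,e_1\w e_4+e_2\w(e_3+e_5))$ --- which is exactly the image of the paper's $K'$ under $e_5\mapsto e_3+e_5$, so your one-step argument compresses the paper's chain $K'\to K$, $K\sim L''$, $L''\to L$ into one degeneration.
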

\begin{proof} 
The classification of ideals in Lemmas \ref{LemmaNoRank2Pencil}, \ref{LemmaRank2Belongs}, \ref{LemmaQuadricsInSubspace} proves that $\HH = \mathcal{X} \cup \mathcal{Y}$,
whereas the parametrizations of Propositions \ref{PropositionParametrizationX}, \ref{PropositionParametrizationY} show that $\mathcal{X}, \mathcal{Y}$ are irreducible subschemes of the claimed dimensions.
Obviously $\mathcal{Y}$ is not contained in $\mathcal{X}$.
Comparing the minimal number of generators of the general member
we deduce, by lower semicontinuity, that $\mathcal{X}$ is not contained in $\mathcal{Y}$ either. 
Thus, $\mathcal{X}$ and $\mathcal{Y}$ are  two distinct irreducible components of $\HH$.

The lexicographic ideal of $\HH$ is $L = (e_1 \w e_2 , e_1 \w e_3 , e_1 \w e_4, e_2\w e_3 \w e_4, e_2 \w e_3 \w e_5)$. 
We saw in the proof of Lemma \ref{LemmaRank2Belongs} that $L$ lies in $\mathcal{Y}$.
On the other hand,
we  have $L = \mathrm{in}_{\mathrm{lex}}(L'')$ where
$$
L'' =\big( e_1\w e_2, e_1 \w e_3, e_1 \w e_4, e_2 \w e_3 \w e_5, e_2 \w e_4 \w (e_3 + e_5) \big).
$$
The change of coordinates $e_5 \mapsto e_5 - e_3$  shows that $L''$ lies in the same $\mathrm{GL}_5$-orbit as the ideal $K\in \mathcal{X}$ of the proof of
 Lemma \ref{LemmaRank2Belongs},
  so $L'' \in \mathcal{X}$, and by Gr\"obner degeneration 
$L \in \mathcal{X}$ as well.
Thus, $L$ belongs to the intersection of the two irreducible components, and in particular it is a singular point.
\end{proof}

\begin{remark}
It is possible to show that  $n=5$  is the least value of $n = \dim_\kk E_1$ for which reducible standard graded Hilbert schemes of $E = \W E_1$ exist.
Intuitively, 
this is due to the fact that non-trivial parametrizations do not involve very low or very high degrees in $E$.
In fact, from the point of view of parametrizations,
interesting ideals $I \subseteq E$ will typically have $I_0=I_1=0$, $I_n = E_n$, 
and no  minimal generators in degree $n-1$.
For this reason, 
when $n \leq 4$ there is not enough room for complicated families, 
and 
all $\HH^\h(E)$ are projective spaces, Grassmannians, or bundles over them, 
with the exception of $n = 4, \h = (1,4,5,2)$, 
where $\HH^\h(E)$ is the  locus of reducible quadrics.
\end{remark}

\subsection*{Acknowledgments}
The authors are grateful to the referees for carefully reading the manuscript and for several helpful comments and suggestions.
 Computations using Macaulay2 \cite{Macaulay2} were helpful in the preparation of this paper.

\end{document}